\newtheorem{proposition}{Proposition}
\newtheorem{definition}{Definition}
\newtheorem*{conjecture*}{Conjecture}
\numberwithin{equation}{section}
\newcommand{\newtext}[1]{{#1}}
\begin{document}
\title{RG approach to the inviscid limit \\ for shell models of turbulence}
\author{Alexei A. Mailybaev} 
\date{Instituto de Matem\'atica Pura e Aplicada -- IMPA, Rio de Janeiro, Brazil \\ Email: alexei@impa.br}

\maketitle

\begin{abstract}
We consider an initial value problem for shell models that mimic turbulent velocity fluctuations over a geometric sequence of scales. Our goal is to study the convergence of solutions in the inviscid (more generally, vanishing regularization) limit and explain the universality of both the limiting solutions and the convergence process. We develop a renormalization group (RG) formalism representing this limit as dynamics in a space of flow maps. For the dyadic shell model, the RG dynamics has a fixed-point attractor, which determines universal limiting solutions. Deviations from the limiting solutions are also universal and given by a leading eigenmode (eigenvalue and eigenvector) of the linearized RG operator. \newtext{Application to the Gledzer shell model reveals the RG attractor in the form of a closed invariant curve, while the Sabra shell model yields chaotic RG dynamics. An important consequence of the RG formalism is the understanding of the different roles of symmetry-preserving (canonical) and symmetry-breaking (e.g. viscous) regularizations.}
\end{abstract}

\section{Introduction} 

Physical models of ideal fluid and wave dynamics can be ill-posed, e.g., as a consequence of blowup in a finite time~\cite{eggers2015singularities}. Classical examples include, among many others, the inviscid Burgers equation~\cite{dafermos2005hyperbolic} and the Euler equations for incompressible ideal fluid~\cite{luo2014potentially}. A common way to define solutions at all times is to consider a regularized system, e.g. by adding viscous terms. Since these terms are small, the question arises whether there is a limit of vanishing regularization. As a paradigmatic example, this approach provides shock wave solutions to the inviscid Burgers equation. Numerical analysis shows that these limiting solutions exist and are universal for a large class of regularizations, for example, when replacing viscous terms with hyperviscous ones. It has been suggested that a similar scenario for the Euler system holds in a stochastic formulation in which the regularization includes both viscous forces and small-scale noise~\cite{thalabard2020butterfly}. Such solutions are called spontaneously stochastic, because they remain probabilistic even after the noise is removed in the limit of vanishing regularization. 

In this paper we address the general question: why the limit of vanishing regularization converges and to what extent the limiting process is universal with respect to the choice of regularization? We consider shell models of turbulence, which simulate physical space using a geometric sequence of scales and allow very precise numerical investigation~\cite{biferale2003shell}. 
Specifically, we consider the dyadic (Desnyansky--Novikov) model~\cite{desnyansky1974evolution} that mimics the Burgers equation~\cite{cheskidov2009vanishing,mailybaev2015continuous}, as well as the Gledzer and Sabra models~\cite{gledzer1973system,l1998improved} related to the Navier--Stokes turbulence~\cite{frisch1999turbulence}. We limit our consideration to deterministic regularizations, leaving a similar study of stochastic regularizations for a forthcoming paper.

Our main result is the development of the renormalization group (RG) formalism. It describes the limit of vanishing regularization as the dynamics of an RG operator acting in the space of flow maps. A similar formalism was presented earlier for models on a discrete space-time lattice~\cite{mailybaev2023spontaneously,mailybaev2023spontaneous,mailybaev_RG_2025}. The extension to shell models in which time is continuous is carried out by introducing the concept of canonical (symmetry preserving) regularizations. For the dyadic model, the RG approach not only justifies the existence and universality of the limiting solutions as a fixed-point attractor of the RG dynamics, but also predicts new universal properties. This new universality is related to the leading eigenmode (eigenvalue and eigenvector) of the linearized RG operator, which determine deviations of regularized solutions from their limiting form. 
The usual viscous regularization is not canonical (it violates time scale invariance), but its RG analysis is mediated by auxiliary canonical regularizations.
We also apply the RG approach to the Gledzer and Sabra shell models. In the former case, \newtext{we show that the attractor of the RG dynamics is an invariant closed curve, which  governes the vanishing regularization limit.}
For the Sabra model, the RG dynamics is chaotic with double exponential divergence of solutions. 

In general, an RG formalism explores self-similarity of a system manifested at a large range of scales, but its precise form may vary a lot depending on the problem. In our case, this is the self-similarity of equations of motion corresponding to the ideal system. As we showed in~\cite{mailybaev2023spontaneous}, our RG approach has much in common with the Feigenbaum theory~\cite{feigenbaum1983universal}; in particular, one can establish the analytical similarity for the model of digital turbulence. 
Opposite to the Kadanoff--Wilson RG approach~\cite{wilson1983renormalization} and its extension to fluid dynamics \cite{forster1977large,yakhot1986renormalization,canet2022functional}, we do not coarse-grain system properties but rather keep all details at small scales intact. One can also recognize in our formalism some features of the inverse RG~\cite{eyink1994analogies,inverseRG,kupiainen2007scaling}, because our RG operator acts by adding an extra largest scale and reconstructing the ideal dynamics at that scale. 

The paper is organized as follows. Section~\ref{sec2} introduces shell models.  Section~\ref{sec_canon} defines the RG operator and relates it to a class of canonical regularizations. Section~\ref{sec_fixedpoint} studies the limit of vanishing regularization in terms of the fixed-point RG attractor for the dyadic model. Section~\ref{subsec_visc} extends the results to the (non-canonical) viscous regularization. Section~\ref{sec_hopf} studies the RG attractor in the Gledzer shell model. Section~\ref{sec_sabra} describes the chaotic RG dynamics in the Sabra model. Section~\ref{sec_disc} summarizes the results and discusses further developments. Some technical derivations are gathered in the Appendix.

\section{Ideal and viscous dyadic models} \label{sec2}

Shell models of turbulence mimic ideal and viscous fluid dynamics using a geometric sequence of scales $ l_n = \lambda^{-n}$, where $\lambda > 1$ is the inter-shell ratio and $n$ are integer shell numbers. In this paper, we set $\lambda = 2$. The associated wavenumbers are defined as $k_n = 1/ l_n = \lambda^n$. Each scale is represented by a shell variable $u_n$. In this and subsequent sections, we consider the dyadic (Desnyansky--Novikov) shell model~\cite{desnyansky1974evolution} with real variables $u_n \in \mathbb{R}$. In Sections~\ref{sec_hopf} and \ref{sec_sabra}, we will extend our approach to the Gledzer and Sabra shell model~\cite{l1998improved} with real and complex variables. 

Equations of the ideal shell model (a toy model for the inviscid Burgers equation or Euler equations of ideal fluid) are formulated as
	\begin{equation}
    	\frac{du_n}{dt} = k_n f_n, \quad n \ge 1, 
	\label{eq1_1a}
    	\end{equation}
for positive shell numbers $n$ and time $t \ge 0$. In the dyadic model, the coupling term $f_n$ takes the form
     	\begin{equation}
	f_n := f(u_{n-1},u_n,u_{n+1}) = u_{n-1}^2- \lambda u_n u_{n+1}. 
	\label{eq1_1fn}
    	\end{equation}
Note that although we use the specific form (\ref{eq1_1fn}), our theoretical construction extends to general homogeneous functions $f$.

We study the initial-boundary value problem (IBVP). The ideal IBVP is given by the system (\ref{eq1_1a}) with the initial conditions 
    	\begin{equation}
	u_n(0) = a_n \ \ \textrm{for} \ \  n \ge 1,
	\label{eq1_IC}
    	\end{equation}
and the boundary condition
    	\begin{equation}
	u_0(t) = b(t) \ \ \textrm{for} \ \ t \ge 0, 
	\label{eq1_BC}
    	\end{equation}
where $b \in C^1$ is a given continuously differentiable function. We will use the short notations $u = (u_n)_{n \ge 1} = (u_1,u_2,\ldots)$ and $a = (a_n)_{n \ge 1}$ for the respective infinite sequences. Physical considerations require that the energy $E = \sum u_n^2 < \infty$ is finite at all times, i.e., the sequences $u(t)$ and $a$ are square-summable. 

The ideal IBVP has two scaling symmetries. The space scaling is formulated as
     	\begin{equation}
	\tilde{u}_n(t) = \lambda u_{n+1}(t), 
	\quad \tilde{a}_n = \lambda a_{n+1}, 
	\quad \tilde{b}(t) = \lambda u_1(t),
	\label{eq1_2SSa}
    	\end{equation}
where the tildes denote a new solution for new initial and boundary conditions. 
\newtext{It is useful to write Eq.~(\ref{eq1_2SSa}) as
     	\begin{equation}
	\tilde{u}(t) = S^+u(t), 
	\quad \tilde{a} = S^+a, 
	\quad \tilde{b}(t) = \lambda u_1(t),
	\label{eq1_2SS}
    	\end{equation}
where we define the scaling operators $S^+a = (\lambda a_2,\lambda a_3,\lambda a_4,\ldots)$ and $S^-a = (0,a_1/\lambda,a_2/\lambda,\ldots)$.} Similarly, the time scaling symmetry takes the form
     	\begin{equation}
	\tilde{u}(t) = \alpha u(\alpha t), 
	\quad \tilde{a} = \alpha a, 
	\quad \tilde{b}(t) = \alpha b(\alpha t),
	\label{eq1_2TS}
    	\end{equation}
where $\alpha > 0$ is an arbitrary positive factor.

The ideal IBVP is generally ill-posed~\cite{constantin2007regularity,cheskidov2023dyadic}. The natural (physically motivated) way is to regularize the system by adding a viscous term to Eqs.~(\ref{eq1_1a}). The resulting equations read
    	\begin{equation}
    	\frac{du_n}{dt} = k_nf_n - \nu k_n^{2} u_n, \quad n \ge 1,
	\label{eq1_V1}
    	\end{equation}
where $\nu > 0$ is the viscosity parameter. 
The viscous IBVP consists of the system~(\ref{eq1_V1}) with initial conditions (\ref{eq1_IC}) and boundary condition (\ref{eq1_BC}). 
Under proper assumptions~\cite{constantin2006analytic,filonov2017uniqueness}, the viscous IBVP is well-posed, possessing unique solutions at all positive times. The ideal model is recovered in the inviscid limit, $\nu \to 0$.
Thus, physically relevant solutions of the ideal IBVP can be sought in the limit of vanishing viscosity.

\section{Canonical regularizations and RG operator} \label{sec_canon}

In this section, we consider regularizations from a general point of view. A regularization, for which the viscous model is an example, is supposed to change the ideal model  such that the respective IBVP is well-posed, i.e., has unique global-in-time solutions. Hence, we can identify a particular regularization with a family $\Phi = \{\Phi_t\}_{t \ge 0}$ of maps $\Phi_t$ providing regularized solutions $u(t)$ as
	\begin{equation}
	u(t) = \Phi_t(a,b).
	\label{eq2_FMa}
    	\end{equation}
\newtext{Therefore, the maps $\Phi_t$ are defined as functions of two arguments: the initial condition $a = (a_n)_{n \ge 1}$ and the boundary function $b \in C^1$. 
In analogy with the dynamical systems theory, we call $\Phi$ a flow map of a regularized IBVP.  
The flow map is assumed to have the following properties. Every solution (\ref{eq2_FMa}) must satisfy the respective initial conditions (\ref{eq1_IC}). The causality property requires that the value of $\Phi_t(a,b)$ at a given time depends on the boundary function $b$ only from the past time interval $[0,t]$.
Finally, $\Phi_t$ satisfies the condition analogous to the usual semigroup relation, which we formulate as
	\begin{equation}
	\Phi_{t+s}(a,b) = \Phi_{s}(a',b'), \quad t,s \ge 0,
	\label{eq2_FMb}
    	\end{equation}
where $a' = \Phi_{t}(a,b)$ with the the time-shifted boundary condition $b'(t') = b(t+t')$. This time shift takes into account non-autonomous dynamics due to time-dependent boundary condition.}

We say that $\Phi$ is the $N$-level regularization if every solution (\ref{eq2_FMa}) satisfies the ideal model Eq.~(\ref{eq1_1a}) for the shells $n = 1,\ldots,N$. The idea is to transfer this regularization to smaller scales using the scaling symmetry (\ref{eq1_2SS}), thereby, constructing the $(N+1)$-level regularization. 
This procedure is described by the following

\newtext{
\begin{definition}
\label{def1}
A flow map $\Phi$ is called renormalizable if, for any $a$ and $b$, there exists a unique solution $u_1(t)$ of the initial value problem
	\begin{equation}
    	\frac{du_1}{dt} = k_1 f(b,u_1,u_2), \quad u_1(0) = a_1,
	\label{eq2_FMe}
    	\end{equation}
where $u_2(t) = \tilde{u}_1(t)/\lambda$ with 
	\begin{equation}
	\tilde u(t) = \Phi_t(\tilde a,\tilde b), \quad
 	\tilde a = S^+a; \quad
	\tilde b(t) = \lambda u_1(t).
 	\label{eq2_FMf}
    	\end{equation}
Then, we introduce the RG operator $\Phi \mapsto \mathcal{R}[\Phi]$ acting on renormalizable flow maps as 
	\begin{equation}
 	\label{eq2_FMff}
	\mathcal{R}[\Phi]_t(a,b) = S^-\tilde u(t)+(u_1(t),0,0,\ldots) = (u_1(t),\tilde{u}_1(t)/\lambda,\tilde{u}_2(t)/\lambda,\ldots).
    	\end{equation}
\end{definition}
}

\begin{proposition}
\label{prop1}
If $\Phi$ is a renormalizable $N$-level regularization, then $\mathcal{R}[\Phi]$ is an $(N+1)$-level regularization.
\end{proposition}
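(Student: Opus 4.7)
The plan is to verify that for $u(t) = \mathcal{R}[\Phi]_t(a,b)$ the ideal shell equation $du_n/dt = k_n f_n$ holds for every $n \in \{1,\ldots,N+1\}$. The case $n=1$ is built into Definition~\ref{def1} via the first relation in (\ref{eq2_FMe}), so all the work lies in treating the shells $n=2,\ldots,N+1$ by transferring the ideal equations that the auxiliary solution $\tilde u(t) = \Phi_t(\tilde a,\tilde b)$ already satisfies on shells $1,\ldots,N$.

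First I would extend the shift identity $u_n(t)=\tilde u_{n-1}(t)/\lambda$ from $n\ge 2$ down to include the boundary value of $\tilde u$. Using (\ref{eq2_FMf}) we have $\tilde u_0(t)=\tilde b(t)=\lambda u_1(t)$, so the uniform rule $\tilde u_m=\lambda u_{m+1}$ holds for all $m\ge 0$. Fix $n\in\{2,\ldots,N+1\}$ and set $m=n-1\in\{1,\ldots,N\}$. Since $\Phi$ is an $N$-level regularization, $d\tilde u_m/dt=k_m\,f(\tilde u_{m-1},\tilde u_m,\tilde u_{m+1})$. Differentiating $u_n=\tilde u_{n-1}/\lambda$ in time and substituting the uniform shift rule on all three arguments of $f$ gives
\[
\frac{du_n}{dt}=\frac{1}{\lambda}\,k_{n-1}\,f(\tilde u_{n-2},\tilde u_{n-1},\tilde u_n)=\frac{k_{n-1}}{\lambda}\,f(\lambda u_{n-1},\lambda u_n,\lambda u_{n+1}).
\]
Since $f$ in (\ref{eq1_1fn}) is homogeneous of degree two (the only property of $f$ the paper uses in its extensions), the right-hand side equals $\lambda k_{n-1} f_n$, and the dyadic identity $k_n=\lambda k_{n-1}$ then yields $du_n/dt=k_n f_n$, completing the verification.

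The one delicate point is the $n=2$ case, where $\tilde u_{n-2}=\tilde u_0$ is not itself a shell variable of the tilded IBVP but a prescribed boundary value; one must check that the specific choice $\tilde b=\lambda u_1$ in (\ref{eq2_FMf}) is exactly what makes the rule $\tilde u_m=\lambda u_{m+1}$ extend uniformly across $m=0$, so that the argument of $f$ on shell $n=2$ matches the ideal coupling $f_2$ evaluated on $u$. This is precisely the content of the space scaling symmetry (\ref{eq1_2SS}) encoded in the definition of $\mathcal{R}[\Phi]$. Beyond this bookkeeping, the proof is just the chain rule, degree-two homogeneity, and the ratio $k_n=\lambda k_{n-1}$; the semigroup and causality properties from (\ref{eq2_FMb}) play no role here because they are already absorbed into the hypothesis that $\mathcal{R}[\Phi]$ is a flow map in the sense of renormalizability.
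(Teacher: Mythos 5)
Your proof is correct and follows essentially the same route as the paper: the paper observes that the second relation in (\ref{eq2_FMe}) together with (\ref{eq2_FMf}) is exactly the space scaling symmetry (\ref{eq1_2SS}) and therefore transports the ideal equations on shells $1,\ldots,N$ of $\tilde u$ to shells $2,\ldots,N+1$ of $u$, with shell $n=1$ supplied by the first relation in (\ref{eq2_FMe}). Your explicit chain-rule computation with degree-two homogeneity and $k_n=\lambda k_{n-1}$, including the careful treatment of $\tilde u_0=\tilde b=\lambda u_1$ at $n=2$, is precisely the verification the paper leaves to the reader.
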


\begin{proof}
\newtext{From the definition of the scaling operators it is clear that $S^-S^+ a = (0,a_2,a_3,\ldots)$, which differs from the identity in the first component.}
Hence, one can see that the relations (\ref{eq2_FMf}) and (\ref{eq2_FMff}) coincide with the scaling relation (\ref{eq1_2SS}) for the components $n \ge2$ of the solution $u(t) = \mathcal{R}[\Phi]_t(a,b)$. 
Since $\tilde u(t)$ is generated by the $N$-level regularization $\Phi$, it satisfies 
 the ideal Eq.~(\ref{eq1_1a}) for the shells $n = 1,\ldots,N$.
Then one can check that $u_n(t)$ satisfy the ideal Eq.~(\ref{eq1_1a}) for the shells $n = 2,\ldots,N+1$. The first relation in Eq.~(\ref{eq2_FMe}) provides the remaining ideal equation for $n = 1$. 
\end{proof}

For now, Definition~\ref{def1} has an important flaw. \newtext{Namely, combining the two symmetries (\ref{eq1_2SS}) and (\ref{eq1_2TS}), one can replace $\tilde{u}(t)$ in Eq.~(\ref{eq2_FMf}) by
	\begin{equation}
	\tilde u(t) = \frac{1}{\alpha}\Phi_{t/\alpha}(\tilde a,\tilde b), \quad
 	\tilde a = \alpha S^+a; \quad
	\tilde b(t) = \alpha \lambda u_1(\alpha t).
	\label{eq2_FMeT}
    	\end{equation}
} It is straightforward to check that Proposition~\ref{prop1} remains valid for this new formulation. This means that our definition of the RG operator is ambiguous. This ambiguity is eliminated by considering a specific subclass of canonical regularizations, as we show next.

We say that the regularization is time-scale invariant if solutions (\ref{eq2_FMa}) obey the symmetry relations (\ref{eq1_2TS}). 
In terms of the flow map, this condition is formulated as \newtext{
	\begin{equation}
	\Phi_{t}(a,b) = \frac{1}{\alpha} \Phi_{t/\alpha}(\tilde a,\tilde b), \quad 
	\tilde a_n = \alpha a_n, \quad
	\tilde b(t) = \alpha b(\alpha t),
	\label{eq2_FMd}
    	\end{equation}
for any $a$, $b$ and $\alpha > 0$. Clearly, this condition guarantees that $\tilde u(t)$ in Eq.~(\ref{eq2_FMeT}) does not depend on $\alpha$.}

\begin{definition}
\label{def2}
We say that a flow map $\Phi$ is a canonical regularization if both $\Phi$ and its RG iterations $\mathcal{R}^N[\Phi]$ for $N \ge 1$ are time-scale invariant and renormalizable.
\end{definition}

We remark that the time-scale invariance of $\Phi$ already implies the time-scale invariance of $\mathcal{R}^N[\Phi]$ for all $N \ge 1$. For canonical regularizations, expressions (\ref{eq2_FMe}) and (\ref{eq2_FMeT}) are equivalent because the time scaling does not change the flow map. Hence, we define the RG operator unambiguously by restricting its action to the space of canonical regularizations. 
By Proposition~\ref{prop1}, regularized solutions $u(t) = \mathcal{R}^N[\Phi]_t(a,b)$ satisfy the ideal model equations at shells $n = 1,\ldots,N$. Thus, all ideal equations are recovered in the limit $N \to \infty$. This property defines the vanishing regularization limit in the space of canonical regularizations as the RG dynamics $\mathcal{R}^N[\Phi]$ with $N \to \infty$. 

The important property of the RG operator $\mathcal{R}$ is that it is uniquely defined by the ideal model. Indeed, the ideal coupling function $f$ is the only model-dependent element in Definition~\ref{def1}. 
The space of canonical regularizations (the domain of $\mathcal{R}$) also depends on properties (symmetries) of the ideal model only. 
On the contrary, all the information on how solutions are regularized is contained in the flow map $\Phi$ itself. These features highlight the distinct roles of the ideal system and regularization in the RG dynamics: the first determines the RG operator $\mathcal{R}$, and the second the initial flow map $\Phi$.

Let us give concrete examples of canonical regularizations.
Given a positive integer $J$, consider the regularized IBVP governed by the system of $N+J$ differential equations
	\begin{equation}
	\frac{du_n}{dt} = k_n \left\{\begin{array}{ll}
	f_n, & n = 1,\ldots,N; \\[2pt]
	f_n-|u_n| u_n, & n = N+1,\ldots,N+J;
	\end{array}\right.
	\label{eq2_E1}
    	\end{equation}
and vanishing shell variables $u_n(t) = 0$ for $n > N+J$ and $t > 0$. This system is time-scale invariant, since the extra dissipative terms are quadratic. The  cutoff at shell $N+J$ guarantees that the respective IBVP is well-posed; see Appendix~\ref{subsec_A1}. Given the values of $J \ge 1$ and $N \ge 0$, we denote the flow map of the respective IBVP as $\Phi^{(N,J)}$. It is straightforward to check (see Appendix~\ref{subsec_A2}) that $\Phi^{(N,J)}$ are canonical regularizations and the RG operator given by Definition~\ref{def1} acts as
	\begin{equation}
	\Phi^{(N+1,J)} = \mathcal{R}[\Phi^{(N,J)}].
	\label{eq2_E2}
    	\end{equation}
Thus, for each fixed $J$, the vanishing regularization limit $N \to \infty$ is represented by the RG dynamics starting from $\Phi^{(0,J)}$.

The example (\ref{eq2_E1}) provides a recipe for an explicit construction of a large class of canonical regularizations, along with the sequence generated by the RG operator. One can simply replace $|u_n|u_n$ in Eq.~(\ref{eq2_E1}) by other types of quadratic dissipative terms. The sharp cutoff at shell $N+J$ used in Eq.~(\ref{eq2_E1}) is a convenient but not a necessary property, although proving the well-posedness of the IBVP without a cutoff is a difficult task in general. 
We consider one example without cutoff in Section~\ref{subsec_visc}.

Finally, we observe that the viscous regularization (\ref{eq1_V1}) is not canonical, because the dissipative term is linear (not quadratic). This leads to important consequences that we investigate later in Section~\ref{subsec_visc}.
The cutoff of models (\ref{eq2_E1}) mimics Large Eddy Simulation (LES) closures in fluid dynamics~\cite{pope2000turbulent}: the dissipative term can be written as $k_n|u_n|u_n = \nu_n(u) k_n^2u_n$ with the effective eddy-viscosity $\nu_n(u) = |u_n|/k_n$. 

\section{Fixed-point RG attractor} 
\label{sec_fixedpoint}

Let us consider a sequence of regularizations $\Phi^{(N)}$ with $N = 0,1,2,\ldots$ generated by the RG dynamics 
	\begin{equation}
	\Phi^{(N+1)} = \mathcal{R}[\Phi^{(N)}], 
	\label{eqSRG_1D}
	\end{equation}
where the initial flow map $\Phi^{(0)}$ is a given canonical regularization. 
Exploiting analogy with the dynamical systems theory, we now formulate two natural conjectures about the RG dynamics and then verify them numerically. The first conjecture is the existence of the fixed point RG attractor $\Phi^{\infty}$ with the property
	\begin{equation}
	\Phi^{(N)} \to \Phi^{\infty} 
	\quad \textrm{as} \quad N \to \infty, \quad 
	\Phi^{(0)} \in \mathcal{B}(\Phi^{\infty}),
	\label{eqSRG_R2}
	\end{equation}
where $\mathcal{B}(\Phi^{\infty})$ is the basin of attraction in the space of canonical regularizations. 
We must specify in which sense the limit in Eq.~(\ref{eqSRG_R2}) is understood.
Let $u^{(N)}(t) = \Phi^{(N)}_t(a,b)$ and $u^{\infty}(t) = \Phi^\infty_t(a,b)$ be the regularized and limiting solutions for any given initial and boundary conditions. Then, the limit (\ref{eqSRG_R2}) signifies that the sequence $u_n^{(N)}(t) \xrightarrow{N \to \infty} u_n^\infty(t)$ converges for any $n$ uniformly in finite time intervals. 
We stress that we do not prove the convergence in this paper, but rather provide a convincing numerical evidence of convergence motivating the above definition. 

An immediate consequence of the limit (\ref{eqSRG_R2}) is that the limiting flow map provides solutions $u^\infty(t) = \Phi^{\infty}_t(a,b)$ of the ideal IBVP.
Moreover, these limiting solutions are universal: they do not depend on the choice of regularization, as long as the initial flow map $\Phi^{(0)}$ belongs to the basin of the attraction. Taking the limit $N \to \infty$ in both sides of Eq.~(\ref{eqSRG_1D}), one can see that $\Phi^\infty$ satisfies the fixed-point condition $\Phi^{\infty} = \mathcal{R}[\Phi^{\infty}]$. This fixed-point condition is understood in the sense that Eqs.~(\ref{eq2_FMe})--(\ref{eq2_FMff}) of Definition~\ref{def1} are satisfied for $\Phi = \Phi^\infty$ and $\mathcal{R}[\Phi] = \Phi^\infty$. 

The second conjecture refers to the linearized RG dynamics. We assume that, for canonical regularizations $\Phi$ sufficiently close to the fixed-point $\Phi^\infty$, the RG operator has the linear approximation
	\begin{equation}
	\mathcal{R}[\Phi] \approx \Phi^\infty + \delta\mathcal{R}^\infty [\Psi], \quad
	\Psi = \Phi-\Phi^{\infty},
	\label{eqSRG_LA}
	\end{equation}
where $\delta\mathcal{R}^\infty$ is a variational derivative of the RG operator. Using this relation, one defines the linearized RG dynamics as
	\begin{equation}
	\Psi^{(N+1)} = \delta\mathcal{R}^\infty [\Psi^{(N)}]
	\label{eqSRG_LD}
	\end{equation}
for the deviations $\Psi^{(N)} = \Phi^{(N)}-\Phi^{\infty}$. We conjecture that the linearized RG dynamics (\ref{eqSRG_LD}) in the limit $N \to \infty$ is governed by the eigenmode solution
	\begin{equation}
	\Psi^{(N)} \approx c \rho^N \Omega,
	\label{eqSRG_1X}
	\end{equation}
where $\rho$ is a leading (largest absolute value) real eigenvalue and $\Omega = \{\Omega_t\}_{t \ge 0}$ a corresponding eigenvector of the eigenvalue problem
	\begin{equation}
	\delta\mathcal{R}^\infty [\Omega] = \rho \,\Omega.
	\label{eqSRG_1Y}
	\end{equation}
Both $\rho$ and $\Omega$ are universal in the sense that they are determined by the RG operator and its fixed point, and not by a specific regularization. 

For the flow maps, the asymptotic expression (\ref{eqSRG_1X}) yields
	\begin{equation}
	\Phi^{(N)} \approx \Phi^\infty+c\rho^N \Omega
	\quad \textrm{as} \quad N \to \infty.
	\label{eqRG_R3}
	\end{equation}
This relation is the new universality property: not only the limiting solution $u^\infty(t) = \Phi_t^\infty(a,b)$, but also the deviations 
	\begin{equation}
	\delta u^{(N)}(t) := 
	u^{(N)}(t)-u^\infty(t) \approx c\rho^N \Omega_t(a,b) 
	\label{eqRG_R3D}
	\end{equation}
of regularized solutions are universal up to a constant factor $c$. The factor $c$ is the only quantity in the right-hand sides of Eqs.~(\ref{eqRG_R3}) and (\ref{eqRG_R3D}) depending on the specific regularization sequence $\{\Phi^{(N)}\}_{N \ge 0}$.

We now verify both relations (\ref{eqSRG_R2}) and (\ref{eqRG_R3})  numerically for the regularized systems~(\ref{eq2_E1}). 
In numerical simulations, we consider three different canonical regularization with the parameters $J = 1,2,3$. The respective RG iterations are given by Eqs.~(\ref{eq2_E2}) and (\ref{eqRG_R3}) as $\Phi^{(N,J)} \approx \Phi^{\infty}+c_J\rho^N \Omega$. In this expression, the dependence on the regularization model $(N,J)$ reduces to the single constant factor $c_J\rho^N$. 
For numerical simulations, we consider two different initial conditions
	\begin{equation}
	\mathrm{IC}_1: \ \ a_n = 2^{-k_n}, \quad 
	\mathrm{IC}_2: \ \ a_n = k_n^{-1/4}(2-\sin n),
	\label{eqS3_2y}
	\end{equation}
which represent the regular (decaying exponentially in $k_n$) and rough  (power-law in $k_n$) states. 
For the boundary function we take 
	\begin{equation}
	b(t) = 2-\cos t. 
	\label{eqS3_2bc}
	\end{equation}
\newtext{Numerical simulations of the regularized systems~(\ref{eq2_E1}) are performed using the \texttt{ode45} and \texttt{ode15s} solvers in Matlab with very high accuracy. 
Note that we do not need to solve the RG Eqs.~(\ref{eq2_FMe})--(\ref{eq2_FMff}) numerically. In fact, we already proved that they are satisfied by solutions of regularized systems (\ref{eq2_E1}). Therefore, the RG formalism is only used for interpreting the observed behavior in the inviscid limit.}

\begin{figure}[tp]
\centering
\includegraphics[width=0.95\textwidth]{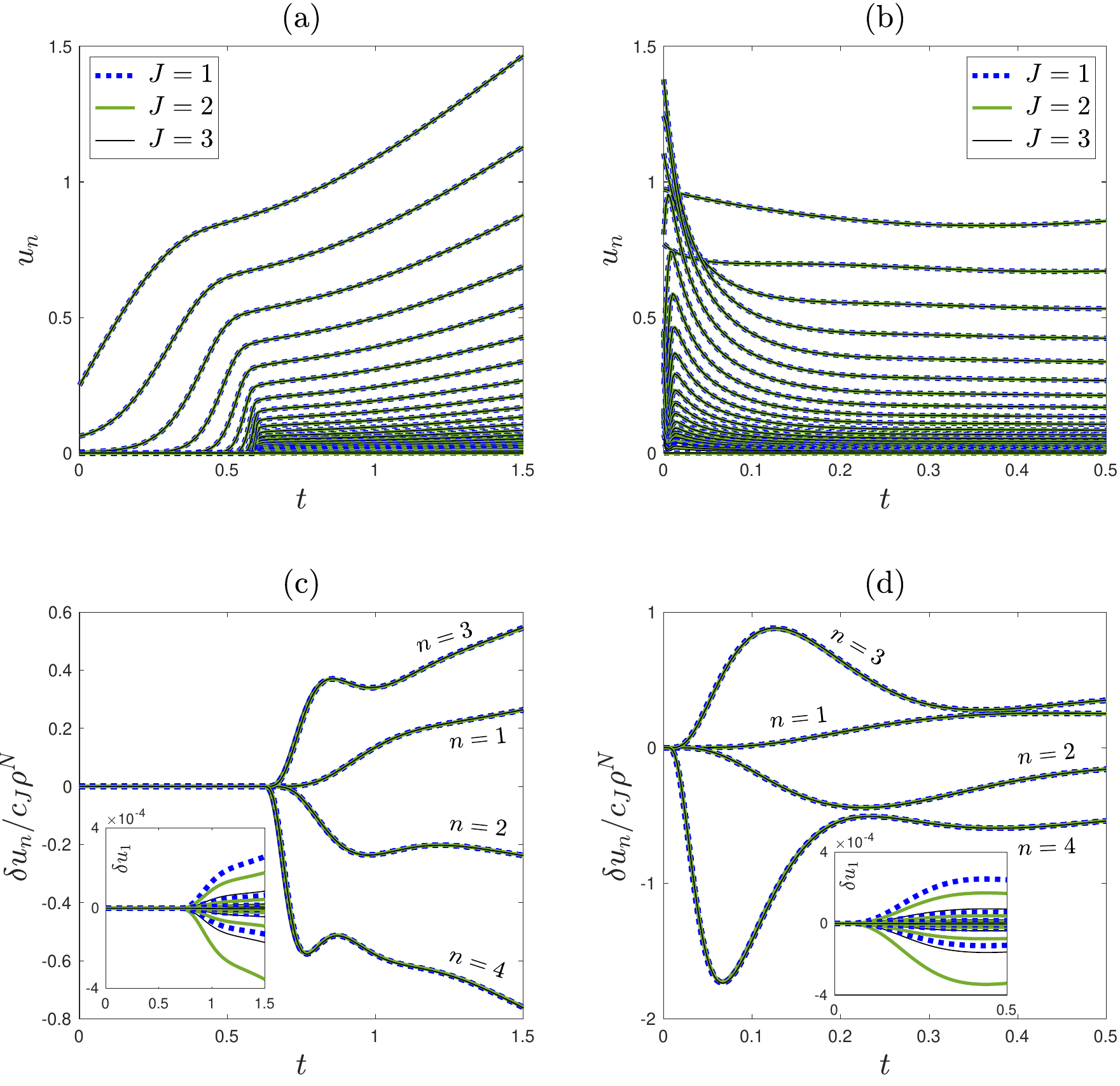}
\caption{Evolution of the regularized models with $J = 1,2,3$ and $N = 20$ from (a) regular initial  condition $\mathrm{IC}_1$ and (b) rough initial condition $\mathrm{IC}_2$. Collapse of the graphs for different models confirms the universality of the limit $N \to \infty$. Panels (c) and (d) show the corresponding deviations $\delta u_n(t)$ from the limiting solutions for $J = 1,2,3$ and $N = 10,\ldots,20$. The main plots present the rescaled graphs $\delta u_n/c_J\rho^N$, and their independence of regularization confirms the RG eigenmode asymptotic (\ref{eqRG_R3}). For comparison, the insets in panels (c and d) show the original deviations $\delta u_1(t)$.}
\label{fig1}
\end{figure}

First, let us consider the regular initial condition $\mathrm{IC}_1$ in Eq.~(\ref{eqS3_2y}). Numerical results for the three models with $N = 20$ and $J = 1,2,3$ are shown in Fig.~\ref{fig1}(a). This figure presents solutions $u_n(t)$ for different shell numbers $n = 1,2,\ldots$ (lower curves correspond to larger $n$). The graphs for different values of $J$ are visually indistinguishable, confirming the convergence of solutions for large $N$ independently of the regularization. Notice that the limiting (ideal model) solution blows up~\cite{dombre1998intermittency,mailybaev2012renormalization} at time $t \approx 0.61$. The inviscid limit extends this solution after the blowup. Figure~\ref{fig1}(b) presents analogous results for the rough initial condition $\mathrm{IC}_2$ in Eq.~(\ref{eqS3_2y}). These solutions also converge to a solution of the ideal IBVP independently of the regularization.

Figures~\ref{fig1}(c,d) verify the predictions (\ref{eqRG_R3}) for both initial conditions. The insets show the deviations of the first component $\delta u_1(t) = u_1(t)-u_1^{\infty}(t)$ for the regularized models with $J = 1,2,3$ and $N = 10,\ldots,20$. Here $u(t) = \Phi^{(N,J)}_t(a,b)$ are obtained by solving the IBVP for the regularized model~(\ref{eq2_E1}), and the limiting solution $u^{\infty}(t)$ is approximated by taking $N = 40$. The main panels (c and d) present the rescaled deviations $\delta u_n(t)/c_J\rho^N$ for the first four shells $n = 1,\ldots,4$. Here we are allowed to set $c_1 = 1$, because we did not normalize the eigenvector, and then estimate $c_2 = -1.38$ and $c_3 = 0.66$ for the remaining two types of regularization. The eigenvalue turns out to be $\rho = -1/2$, and this value is justified analytically in the Appendix~\ref{subsec_A3}. The accurate collapse of the rescaled deviations for three different regularized models $J = 1,2,3$ and the wide range of RG iterations $N = 10,\ldots,20$ is the convincing numerical verification of the asymptotic relation~(\ref{eqRG_R3}). We remark that the RG eigenmode vanishes at pre-blowup times in Fig.~\ref{fig1}(c).

\section{RG approach to the viscous model}
\label{subsec_visc}

The concept of canonical regularization is determined solely by properties of the ideal system, in particular, by the time-scale invariance of the dyadic model. Therefore, it is natural to expect that a regularization originating from different physical mechanisms is not necessarily canonical. Indeed, this is precisely the case of the viscous model (\ref{eq1_V1}), which mimics the physical dissipative mechanism. Since the viscous term is not time-scale invariant, the viscous model does not belong to the class of canonical regularizations. 

Numerical simulations indicate that the limit $\nu \to 0$ in the viscous model yields the same solution as the limit $N \to \infty$ in canonical regularizations; see Fig.~\ref{fig2}(a). On the contrary, the deviations $\delta u_n(t)$ from the limiting solution cease to be universal,  as demonstrated by the green curve in Fig.~\ref{fig2}(b).  We now show how these properties as well as certain other universal or non-universal features of solutions can be explained using the RG formalism. 

Our analysis will systematically use an empirical observation that the limiting solutions have the (Kolmogorov) asymptotic form 
	\begin{equation}
	u_n^\infty(t) \approx \gamma_t(a,b)k_n^{-1/3} \quad
	\textrm{as} \quad n \to \infty,
	\label{eqVM_E3u}
	\end{equation}
where the real prefactor $\gamma_t(a,b)$ depends on time, initial and boundary conditions but does not dependent on $n$. There is no proof of this relationship, but some related rigorous results are known~\cite{cheskidov2009vanishing}. The analysis presented below combines Eq.~(\ref{eqVM_E3u}) with our RG approach, leading to non-trivial predictions for the viscous model. 

\begin{figure}[tp]
\centering
\includegraphics[width=0.9\textwidth]{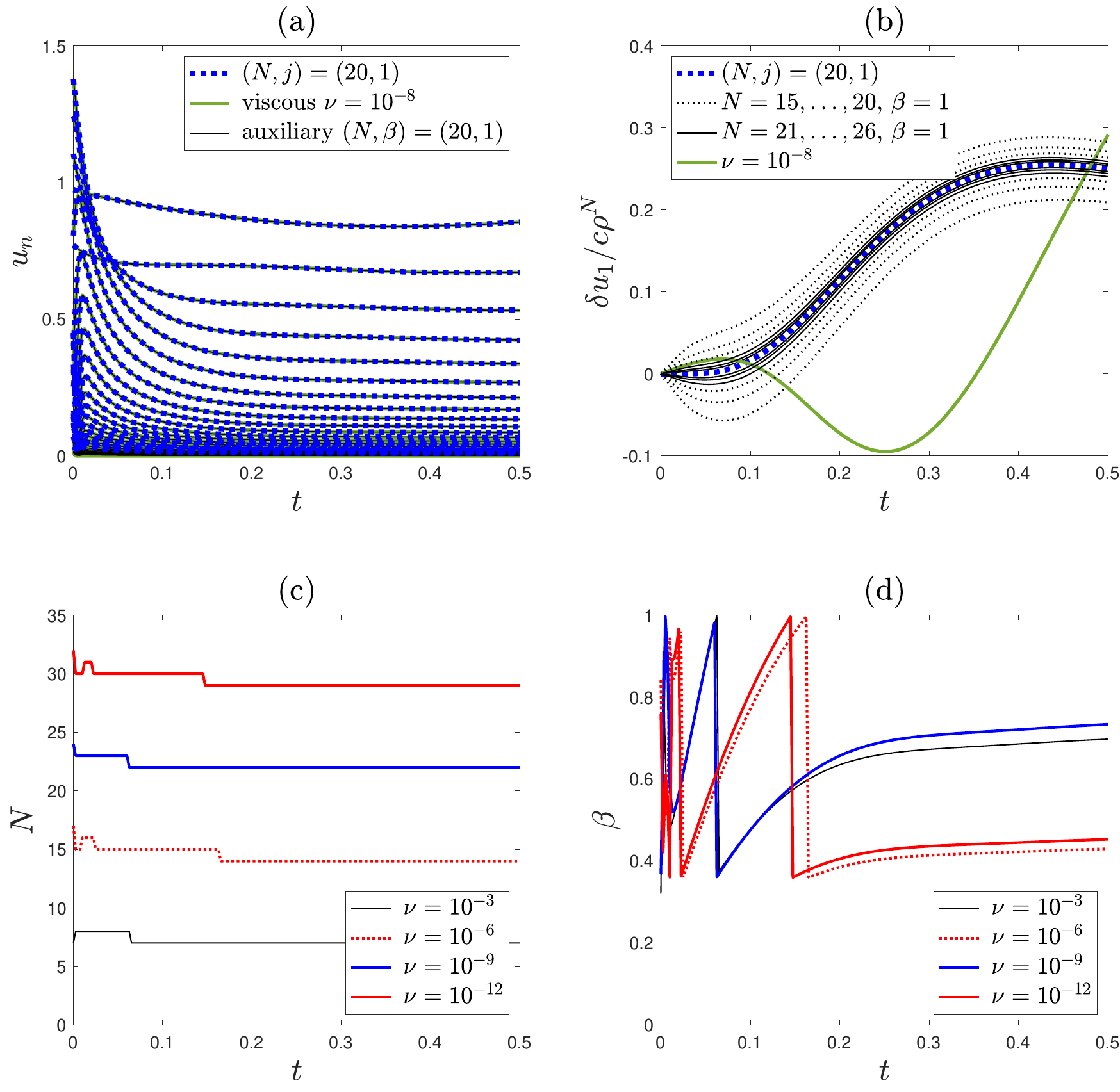}
\caption{(a) Regularized IBVP solutions for different models with the initial condition $\mathrm{IC}_2$. Their collapse confirms the convergence to the universal solution of the ideal IBVP. (b) Normalized deviations $\delta u_1(t)$ for different regularization models. Collapse of the graphs for large $N$ confirms the universal eigenmode correction for the canonical models $(N,J)$ and $(N,\beta)$. For the viscous model, the normalized deviation (green line) has a different (non-universal) shape. (c,d) Dependence of $N_t^{(\nu)}(a,b)$ and $\beta_t^{(\nu)}(a,b)$ on time for decreasing viscosities $\nu$.}
\label{fig2}
\end{figure}

\subsection{Auxiliary regularized model} 
\label{subsec_aux}

Let us first introduce and analyze an auxiliary system of the form
    	\begin{equation}
	\frac{du_n}{dt} = k_nf_n
	-\beta \frac{|u_N|}{k_N} k_n^{2} u_n.
	\label{eqVM_E1}
    	\end{equation}
In this system, we introduced an integer parameter $N \ge 0$ and a real parameter $\beta > 0$. We assume that the corresponding IBVP is well-posed: we do not have a proof but numerical simulations suggest that solutions do not blow up. We denote the corresponding flow maps by $\Phi^{(N,\beta)}$. 
One can see that system (\ref{eqVM_E1}) is time-scale invariant.

Our next step is to show that the flow maps $\Phi^{(N,\beta)}$ are related by the RG operator. For this purpose, let us analyze Eq.~(\ref{eqVM_E1}) for $n = 1$ having the form
    	\begin{equation}
	\frac{du_1}{dt} = k_1 f_1
	-\beta \frac{|u_N|}{k_N} k_1^{2} u_1.
	\label{eqVM_E2}
    	\end{equation}
Using Eq.~(\ref{eqVM_E3u}), the last term in Eq.~(\ref{eqVM_E2}) is estimated as
	\begin{equation}
	\beta \frac{|u_N|}{k_N} k_1^{2} u_1 \approx k_N^{-4/3} \beta \gamma_t(a,b) k_1^{2} u_1(t)
	\to 0 \quad \textrm{as} \quad N \to \infty.
	\label{eqVM_E3}
	\end{equation}
Hence, the first-shell Eq.~(\ref{eqVM_E2}) takes the ideal form asymptotically for large $N$, i.e., it yields Eq.~(\ref{eq2_FMe}) of Definition~\ref{def1}. The rest of the argument is the same as for the regularized system (\ref{eq2_E1}) (see Section~\ref{sec_canon} and Appendix~\ref{subsec_A2}) and leads to the relation
	\begin{equation}
	\Phi^{(N+1,\beta)} \approx \mathcal{R}[\Phi^{(N,\beta)}]
	 \quad \textrm{as} \quad N \to \infty.
	\label{eqVM_E4P}
	\end{equation}
This relation is the analog of Eq.~(\ref{eq2_E2}), but here it is valid only asymptotically for large $N$. Together with the time-scale invariance property, Eq.~(\ref{eqVM_E4P}) suggests that $\Phi^{(N,\beta)}$ are canonical regularizations related by the RG operator asymptotically for large $N$.

Assuming that the maps $\Phi^{(N,\beta)}$ belong to the basin of attraction of the fixed point (\ref{eqSRG_R2}), we have  $\Phi^{(N,\beta)} \to \Phi^{\infty}$ as $N \to \infty$ with the same limiting flow map $\Phi^\infty$ as in Section~\ref{sec_fixedpoint}. This property is verified numerically in Fig.~\ref{fig2}(a), where we plot the solution for $(N,J) = (20,1)$ from Fig.~\ref{fig1}(b) and the analogous solution for $(N,\beta) = (20,1)$. 

Next we study the validity of the correction term in Eq.~(\ref{eqRG_R3}) for the auxiliary model. For large $N$, this term competes with the contribution of the small regularization term in Eq.~(\ref{eqVM_E2}). 
The latter perturbs the RG operator in Eq.~(\ref{eqVM_E4P}); see Definition~\ref{def1}.
Using expression (\ref{eqVM_E3}) for this regularization term with $k_N = 2^N$, we estimate its magnitude for large $N$ as proportional to $k_N^{-4/3} = \rho_*^{N}$ with $\rho_* = 2^{-4/3} \approx 0.4$. 
Since $\rho_* < |\rho| = 0.5$, the regularization term decays faster than the leading eigenmode correction in Eq.~(\ref{eqRG_R3}). Hence, the leading eigenmode term in Eq.~(\ref{eqRG_R3}) remains valid. The resulting asymptotic expression reads
	\begin{equation}
	\Phi^{(N,\beta)} \approx \Phi^{\infty} + c_{\beta} \rho^N \Omega
	\ \ \textrm{as} \ \ N \to \infty,
	\label{eqVM_E4}
	\end{equation}
where the constant factor $c_{\beta}$ depends only on $\beta$. 

Expression~(\ref{eqVM_E4}) is verified numerically in Fig.~\ref{fig2}(b). Here we plot the rescaled deviation $\delta u_1(t)/c_J\rho^N$ from Fig.~\ref{fig1}(d) (bold dotted line) and the analogous rescaled deviations for the auxiliary model (\ref{eqVM_E1}) with $\beta = 1$ and $N = 15,\ldots,26$ (thin dotted and solid lines). The eigenmode prefactor in Eq.~(\ref{eqVM_E4}) is estimated as $c_{\beta = 1} \approx -0.196$. The graphs collapse at large $N$, which confirms the universality of the correction term.
The proximity of $\rho_* \approx 0.4$ to the absolute eigenvalue $|\rho| = 0.5$ explains  a rather slow convergence in Fig.~\ref{fig2}(b) for the auxiliary model. This example provides further support for our RG theory, this time for regularization without small-scale truncation.

\subsection{Viscous model and inviscid limit}

Now we are ready to explain the inviscid limit in the viscous model (\ref{eq1_V1}). Viscous regularization is not canonical, so the RG approach does not directly apply to the viscous case. However, we can relate solutions of the viscous model to solutions of the auxiliary regularized model.

Let us denote the flow map of the viscous IBVP by $\Phi^{(\nu)}$. The regularization term in the auxiliary model (\ref{eqVM_E1}) is designed such that it is identical to the viscous term in Eq.~(\ref{eq1_V1}) when
    	\begin{equation}
	\beta = \frac{\nu k_N}{|u_N(t)|}.
	\label{eqVM_2}
    	\end{equation}
This identification implies the relation between the corresponding flow maps in the form
    	\begin{equation}
	\Phi^{(\nu)}_{dt}(a,b) = \Phi^{(N,\beta)}_{dt}(a,b), \quad
	\beta = \frac{\nu k_N}{|a_N|}.
	\label{eqVM_3}
    	\end{equation}
This relation is valid only for infinitesimal time steps $dt$, because the relation (\ref{eqVM_2}) is time dependent.

Let $u^{(\nu)}(t) = \Phi^{(\nu)}_{t}(a,b)$ be the solution of the viscous IBVP for a fixed viscosity, initial and boundary conditions. Similarly to Eq.~(\ref{eqVM_3}) we write the infinitesimal time-step relation at arbitrary time $t$ as
    	\begin{equation}
	u^{(\nu)}(t+dt) = \Phi^{(\nu)}_{dt}\left(u^{(\nu)}(t),b'\right) 
	= \Phi^{(N,\beta)}_{dt}\left(u^{(\nu)}(t),b'\right), 
	\quad
	\beta = \frac{\nu k_N}{|u_N^{(\nu)}(t)|},
	\label{eqVM_3t}
    	\end{equation}
where $b'(t') = b(t+t')$ is the time-shifted boundary condition.
Let us select $N = N_t^{(\nu)}(a,b)$ as the largest shell number providing the parameter $\beta = \beta_t^{(\nu)}(a,b) = \nu k_N/|u_N^{(\nu)}(t)| \le 1$. The time dependence of such $N$ and $\beta$ for different (decreasing) viscosities $\nu$ and specific initial and boundary conditions are shown Fig.~\ref{fig2}(c,d). One can see that the inviscid limit $\nu \to 0$ implies $N \to \infty$. We argued in Section~\ref{subsec_aux} that $\Phi^{(N,\beta)} \to \Phi^\infty$ as $N \to \infty$. Assuming that this convergence is uniform with respect to $\beta \le 1$, the identity~(\ref{eqVM_3t}) implies that the evolution of the viscous solution can be approximated by the limiting flow map $\Phi^\infty$ at all times in the inviscid limit. This explains why the viscous regularization yields the same inviscid limit $\Phi^{(\nu)} \to \Phi^\infty$. This convergence is verified numerically in Fig.~\ref{fig2}(a) by comparing solutions for $\nu = 10^{-8}$ and $(N,\beta) = (20,1)$.

\subsection{Asymptotic form of deviations}\label{sec_AF}

Despite the limiting flow map remains universal, the expression (\ref{eqVM_E4}) for the universal correction term is not valid for the viscous regularization. Indeed, this term is affected by a nontrivial functional dependences of $N = N_t^{(\nu)}(a,b)$ and $\beta = \beta_t^{(\nu)}(a,b)$ in the infinitesimal identity (\ref{eqVM_3t}). As an example, the green curve in Fig.~\ref{fig2}(b) represents the rescaled deviation $\delta u_1(t)$, which is clearly different from the universal eigenmode dependence (thick dotted line). 

More information about this convergence can be obtained from the following formal derivation, which combines Eq.~(\ref{eqVM_2}) with the power-law asymptotic (\ref{eqVM_E3u}). We have $\beta \approx \nu k_N^{4/3}/|\gamma_t(a,b)|$.
Let us consider the viscosity sequence $\nu_N = k_N^{-4/3} = 2^{-4N/3}$, which vanishes as $N \to \infty$. Then, the auxiliary model parameter becomes
    	\begin{equation}
	\beta \approx \frac{1}{\gamma_t(a,b)}.
	\label{eqVM_2U}
    	\end{equation}
Note that expression (\ref{eqVM_2U}) does not depend on viscosity. 
Substituting Eq.~(\ref{eqVM_E4})  into the right-hand side of Eq.~(\ref{eqVM_3t}), we obtain
	\begin{equation}
	u^{(\nu_N)}(t+dt) 
	\approx \Phi^{\infty}_{dt}\big(u^{(\nu_N)}(t),b'\big)
	+ c_{\beta} \rho^N  \Omega_{dt}\big(u^{(\nu_N)}(t),b'\big).
	\label{eqPT_1}
    	\end{equation}
	
Let us fix the initial and boundary conditions $(a,b)$ and introduce the rescaled deviation $v(t)$ from the limiting solution $u^\infty(t) = \Phi^{\infty}_{t}(a,b)$ as 
	\begin{equation}
	u^{(\nu_N)}(t) = u^\infty(t)+\rho^N v(t). 
	\label{eqPT_1b}
    	\end{equation}
Expanding the middle term in Eq.~(\ref{eqPT_1}) to the first-order in $\rho^N$, we write
	\begin{equation}
	\Phi^{\infty}_{dt}\big(u^{(\nu_N)}(t),b'\big) 
	\approx
	\Phi^{\infty}_{dt}\big(u^{\infty}(t),b'\big) 
	+\rho^N\delta\Phi^{\infty}_{dt}\big(v(t);u^{\infty}(t),b'\big),
	\label{eqPT_1exp}
    	\end{equation}
where $\delta\Phi^{\infty}_{dt}(\delta u;u^{\infty}(t),b')$ is the variational derivative of $\Phi^\infty_{dt}(u^\infty(t)+\delta u,b')$. Similarly, the last term in Eq.~(\ref{eqPT_1}) to the first-order becomes
	\begin{equation}
	c_{\beta} \rho^N  \Omega_{dt}\big(u^{(\nu_N)}(t),b'\big)
	\approx
	c_{\beta} \rho^N  \Omega_{dt}\big(u^\infty(t),b'\big).
	\label{eqPT_1expB}
    	\end{equation}
Finally, using Eqs.~(\ref{eqPT_1b})--(\ref{eqPT_1expB}) in the relation (\ref{eqPT_1}), cancelling the zero-oder terms $u^\infty(t+dt) = \Phi^{\infty}_{dt}\big(u^{\infty}(t),b'\big)$ and then the common factor $\rho^N$, we obtain
	\begin{equation}
	v(t+dt) 
	\approx \delta\Phi^{\infty}_{dt}\left(v(t);u^{\infty}(t),b'\right)+ c_{\beta} \Omega_{dt}\big(u^\infty(t),b'\big).
	\label{eqPT_2}
    	\end{equation}
This is the asymptotic linearized equation for the rescaled correction $v(t)$.

The linearized Eq.~(\ref{eqPT_2}) must be solved with the trivial initial condition $v(0) = (0,0,\ldots)$ and $\beta$ given by Eq.~(\ref{eqVM_2U}). Note that this linearized initial value problem and, hence, its solution do not depend on viscosity $\nu_N$. Denoting this solution by $v^\infty(t)$, we write Eq.~(\ref{eqPT_1b}) as
	\begin{equation}
	u^{(\nu_N)}(t) \approx u^\infty(t)+\rho^N v^{\infty}(t)
	\quad \textrm{as} \quad \nu_N = k_N^{-4/3} \to 0.
	\label{eqPT_1R}
    	\end{equation}
This is our RG prediction for the viscous model for small viscosities. It provides the scaling of the correction term for the specific vanishing viscosity sequence. Here the functional form of the correction $v^{\infty}(t)$ is the same for all (large) $N$, but it is not universal. The universality is broken because $\beta$ in Eq.~(\ref{eqPT_2}) is given by the expression (\ref{eqVM_2U}) intrinsic to the viscous regularization. 

\begin{figure}[tp]
\centering
\includegraphics[width=0.95\textwidth]{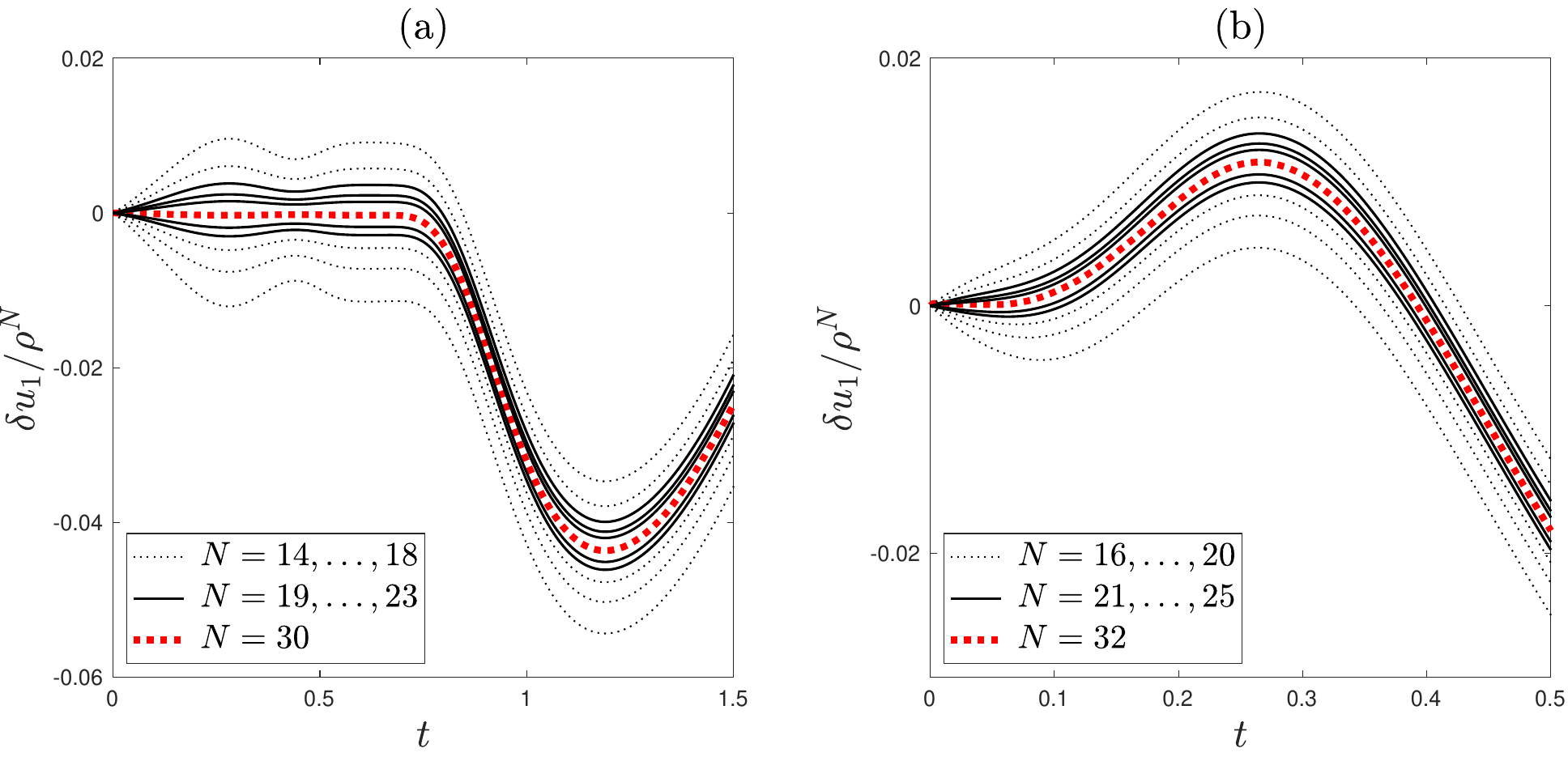}
\caption{Convergence of the rescaled deviations $\delta u_1/\rho$ in the viscous model with $\nu_N = k_N^{-4/3} = 2^{-4N/3}$ and increasing values of $N$. The panels (a) and (b) correspond to the initial conditions $\mathrm{IC}_1$ and $\mathrm{IC}_2$, respectively.}
\label{fig3}
\end{figure}

Relation (\ref{eqPT_1R}) is verified numerically in Fig.~\ref{fig3} showing the rescaled deviations $\delta u_1(t)/\rho^N$ of the first shell for the initial and boundary conditions (\ref{eqS3_2y}) and (\ref{eqS3_2bc}). The deviation is defined as $\delta u(t) = u^{(\nu_N)}(t)-u^\infty(t)$, where $u^\infty(t)$ is approximated by taking $\nu_N$ with $N = 50$. 
The rescaled deviations converge with increasing $N$, but the limiting functions are different from the universal eigenmode in Figs.~\ref{fig1}(c,d) and \ref{fig2}(b). One can also notice that the convergence rate is rather slow, similarly to Fig.~\ref{fig2}(b). This is caused by the viscous term present in Eq.~(\ref{eq1_V1}) at all (including large) scales, similarly to Eq.~(\ref{eqVM_E1}) as we explained in Section~\ref{subsec_aux}. Similar study can be carried out for other types of non-canonical (for example, hyperviscous) regularizations, which lead to different expressions for the sequence $\nu_N$ and the function $v^\infty(t)$ in Eq.~(\ref{eqPT_1R}).

In summary, the vanishing viscosity limit of the dyadic model is explained by the fixed-point RG attractor $\Phi^\infty$. Here, despite the viscous regularization is not canonical, it is related to the canonical auxiliary model by a proper time-dependent choice of parameters. In this way, the viscous regularization inherits a part (but not all) of universal properties of canonical regularizations: its inviscid limit is the same as in Eq.~(\ref{eqSRG_R2}) but lacking the universality of the correction term in Eq.~(\ref{eqRG_R3}). 
The asymptotic relation (\ref{eqPT_1R}) holds with the non-universal (intrinsic to the viscous model) correction function $v^\infty(t)$. 

\section{\newtext{Attracting closed invariant curve of RG dynamics}}
\label{sec_hopf}

In this section, we further explore the dynamical systems approach to the vanishing regularization limit. \newtext{We present here a more complex example of an RG attractor represented by a closed invariant curve.} In this case regularized solutions do not converge in the limit $N \to \infty$, but one can describe them universally in terms of a one-parameter family of flow maps.

We consider the shell model (\ref{eq1_1a}) for $\lambda = 2$, real variables $u_n \in \mathbb{R}$ and the coupling function 
     	\begin{equation}
	f_n := f(u_{n-2},\ldots,u_{n+2}) = 
	\frac{9}{40}\,u_{n-1}u_{n-2} 
	+\frac{11}{20}\,u_{n+1}u_{n-1}  
	-2 u_{n+2}u_{n+1}
	 +2u_{n+1}^2-u_nu_{n-1}.
	\label{eqSM_2G}
    	\end{equation}
The function (\ref{eqSM_2G}) is designed as a combination of Gledzer's models~\cite{gledzer1973system} with energy-conserving nonlinearity, and it couples each shell variable to two neighbors from each side. 
	
The ideal IBVP is defined by Eqs.~(\ref{eq1_1a}) and (\ref{eqSM_2G}) with the initial conditions (\ref{eq1_IC}) and the two boundary conditions
    	\begin{equation}
	u_{-1}(t) = b_{-1}(t),\ \ u_0(t) = b_0(t) \ \ \textrm{for} \ \ t \ge 0.
	\label{eq1_BCG}
    	\end{equation}
We will use the short notation $b = (b_{-1},b_0)$ for a pair of continuously differentiable boundary functions $b_{-1},b_0 \in C^1$. The viscous model takes the form (\ref{eq1_V1}).

\subsection{RG formalism}

The RG formalism is defined by the symmetries and couplings of the ideal IBVP. Similarly to the dyadic model, the symmetries are the time and space scalings.
The time scaling symmetry has the same form (\ref{eq1_2TS}). For the boundary condition (\ref{eq1_BCG}), the space scaling symmetry is formulated as
     	\begin{equation}
	\tilde{u}(t) = S^+u(t), 
	\quad \tilde{a} = S^+ a, 
	\quad \tilde{b}(t) = (\lambda b_0(t), \, \lambda u_1(t)).
	\label{eq1_2SS_G}
    	\end{equation}
The RG formalism is introduced in the same way as in Section~\ref{sec_canon}; see Definitions~\ref{def1} and \ref{def2}. 
\newtext{Here Eqs.~(\ref{eq2_FMe}) and (\ref{eq2_FMf}) are adapted to coupling function (\ref{eqSM_2G}) and boundary condition (\ref{eq1_BCG}) as
	\begin{equation}
    	\frac{du_1}{dt} = k_1 f(b_1,b_0,u_1,u_2,u_3), \quad u_1(0) = a_1,
	\label{eq2_FMeN}
    	\end{equation}
where $u_2(t) = \tilde{u}_1(t)/\lambda$ and $u_3(t) = \tilde{u}_2(t)/\lambda$ with 
	\begin{equation}
	\tilde u(t) = \Phi_t(\tilde a,\tilde b), \quad
 	\tilde a = S^+a; \quad
	\tilde b(t) = \big(\lambda b_0(t),\,\lambda u_1(t)\big).
 	\label{eq2_FMfG}
    	\end{equation}
}The RG operator $\mathcal{R}$ acts in the space of canonical regularizations. Canonical regularizations are flow maps $\Phi$, which are infinitely renormalizable and invariant with respect to the time scaling.
Since the RG operator maps $N$-level to $(N+1)$-level canonical regularizations, the vanishing regularization limit is associated with the RG dynamics $\mathcal{R}^N[\Phi]$ as $N \to \infty$. 
Recall that the RG formalism separates the roles of the ideal system and regularization:
the RG operator depends on the coupling function (\ref{eqSM_2G}) of the ideal model, while all information on regularization is contained in the flow map $\Phi$. 

Examples of canonical regularizations are given by the models (\ref{eq2_E1}), where $u_n(t) = 0$ for $n > N+J$ and $t > 0$.
We denote by $\Phi^{(N,J)}$ a flow map of the respective regularized IBVP.
Like in Section~\ref{sec_canon}, these flow maps satisfy the RG relation: $\Phi^{(N+1,J)} = \mathcal{R}[\Phi^{(N,J)}]$.
Models (\ref{eq2_E1}) provide a class of canonical regularizations that we use for numerical analysis of the RG dynamics.

\subsection{RG attractor}

Figures \ref{figQP1}(a,b) show evolutions of shell variables $u_n(t)$ of the regularized model (\ref{eq2_E1}) and (\ref{eqSM_2G}) with $(N,J) = (20,3)$ for two initial conditions (\ref{eqS3_2y}) and boundary conditions $b_{-1}(t) = 1$ and $b_0(t) = 2+\sin t$. Figures \ref{figQP1}(c,d) show the respective evolutions of the first two variables $(u_1,u_2)$ for different cutoff parameters $N = 30,\ldots,50$. For the initial condition $\mathrm{IC}_1$ (panel c), the figures verify that the regularized solutions converge at pre-blowup times $t \le t_b \approx 3.63$ but diverge at larger times $t > t_b$. For the initial condition $\mathrm{IC}_2$ (panel d), the regularized solutions diverge at all positive times.

\begin{figure}[tp]
\centering
\includegraphics[width=0.98\textwidth]{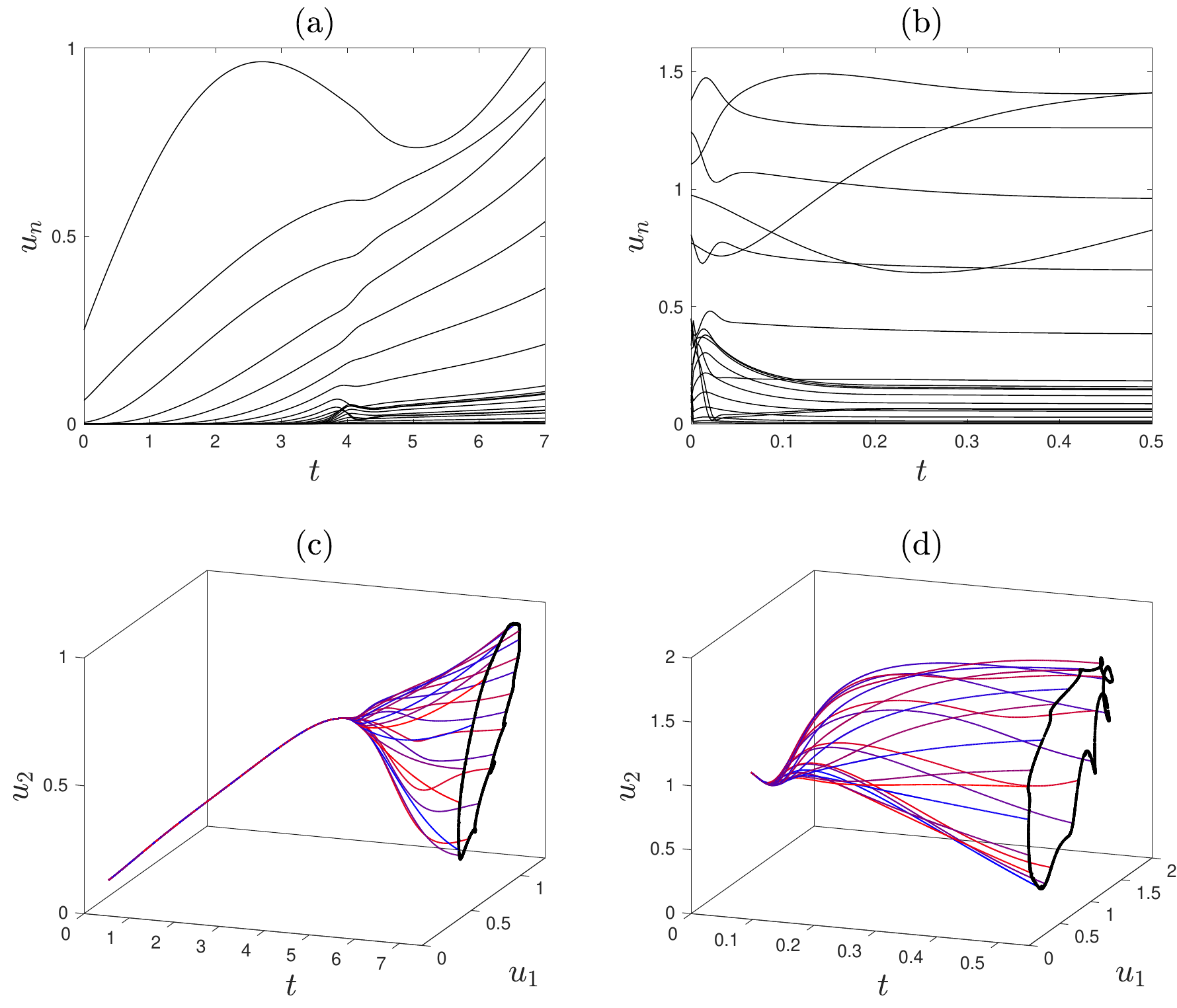}
\caption{Evolutions of all shell variables $u_n(t)$ for the regularized model (\ref{eq2_E1}) and (\ref{eqSM_2G}) with $(N,J) = (20,3)$ and the initial condition (a) $\mathrm{IC}_1$ and (b) $\mathrm{IC}_2$. Lower panels show the respective evolutions of the first two variables $(u_1,u_2)$, where the cutoff parameter is changed in the range $N = 30,\ldots,50$. Here the color changes gradually from red for $N = 30$ to blue for $N = 50$.}
\label{figQP1}
\end{figure}

Empty dots in Fig.~\ref{figQP2} represent the values of $(u_1,u_2)$ at the fixed times: $t = 7$ for the first and $t = 0.5$ for the second initial condition; these are terminal points of solutions from Figs.~\ref{figQP1}(c,d). Clearly, the RG attractor is not a fixed point in this case. We cannot, however, perform our numerical simulations for much larger numbers $N$. Therefore, we employ a different strategy for the study of the RG attractor: we will analyze the RG dynamics $\mathcal{R}^N[\Phi]$ starting from different canonical flow maps $\Phi$, where the latter are constructed randomly as follows.

Let us consider regularizations (\ref{eq2_E1}) with $J = 3$ and modified dissipative terms $c_n|u_n| u_n$, which are multiplied by positive coefficients $c_n$. We perform $100$ simulations for each $N = 40,\ldots,80$ by choosing $c_n$ randomly from the interval $[0,3]$. 
The resulting values $(u_1,u_2)$ are presented by black dots in Fig.~\ref{figQP2}, which form closed curves in the phase space. According to Eq.~(\ref{eq2_E2}), each random choice of regularization corresponds to a specific RG sequence $\mathcal{R}^N[\Phi]$ starting from a different initial flow map $\Phi$. Hence, the black dots forming a curve in Fig.~\ref{figQP2} determine the RG attractor probed by 100 different initial conditions. Similar curves appear for any pair of shell variables. The one-dimensional asymptotic structure is also apparent in Figs.~\ref{figQP1}(c,d), where the data from Fig.~\ref{figQP2} is added at final times. 

\begin{figure}[tp]
\centering
\includegraphics[width=0.9\textwidth]{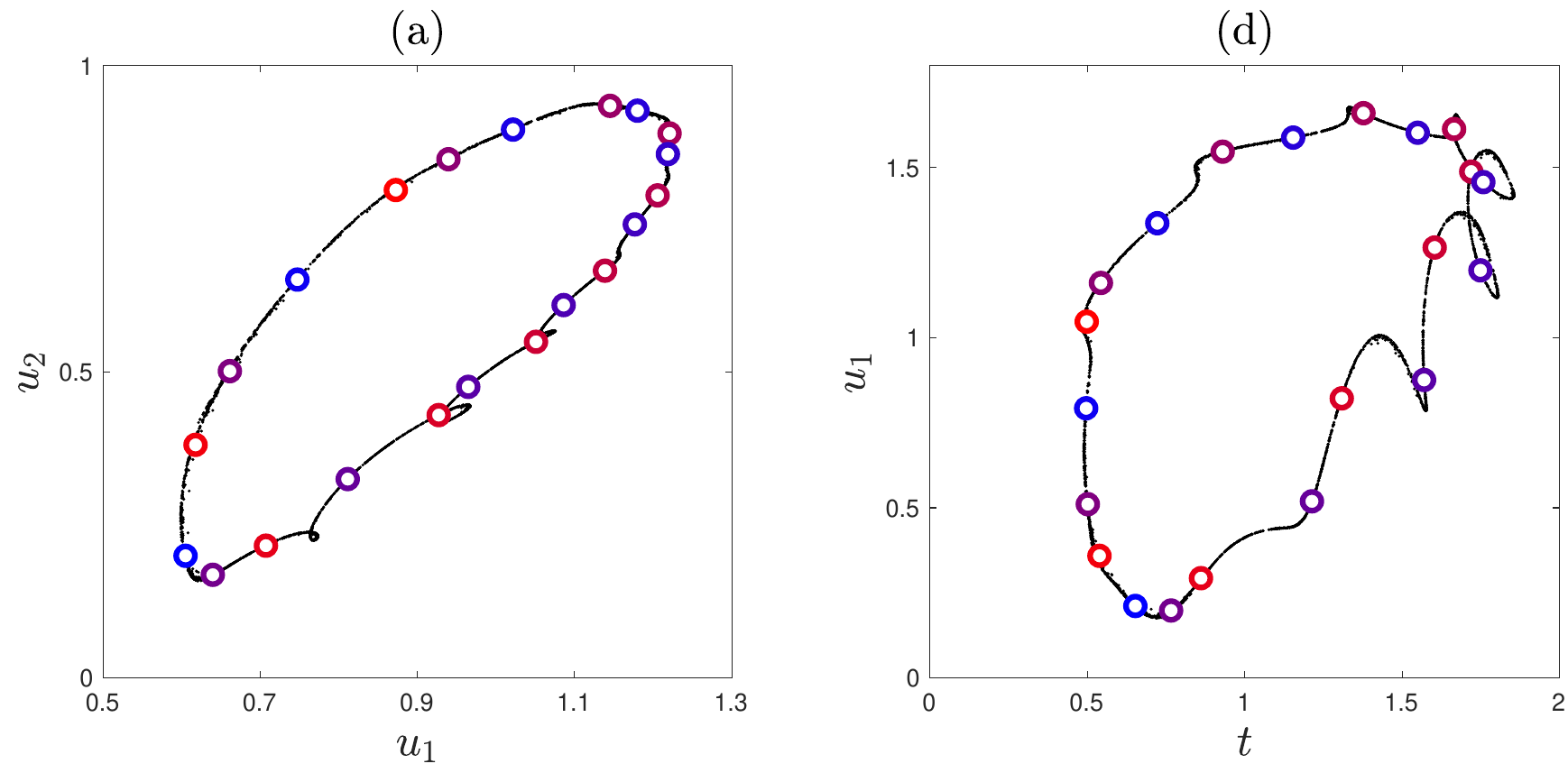}
\caption{Evolutions of the first two variables $(u_1,u_2)$ for the initial conditions and times: (a) $\mathrm{IC}_1$ and $t = 7$, (b) $\mathrm{IC}_2$ and $t = 0.5$.
Empty dots correspond to the regularizations (\ref{eq2_E1}) with $J = 3$, where the cutoff parameter is changed in the range $N = 30,\ldots,50$.
Here the color varies gradually from red for $N = 30$ to blue for $N = 50$.
Black dots forming a closed curve are obtained using 100 random canonical regularizations with $N = 40,\ldots,80$.}
\label{figQP2}
\end{figure}

\newtext{Our numerical results suggest that the attractor of the RG dynamics is a closed curve $\Phi^{(\theta)}$ in the space of flow maps parametrized by points of a circle, $\theta \in S^1$. In the theory of dynamical systems, such attractors appear, e.g., in the Hopf bifurcation of maps~\cite{guckenheimer2013nonlinear}. Note also that the folds and self-intersections of the invariant curves in Fig.~\ref{figQP2} are a consequence of the two-dimensional projection; examination of the three-dimensional graphs in the space $(u_1,u_2,u_3)$ does not reveal any singularities.}
This attractor leads to a more sophisticated (than in the fixed-point case) but still universal description of the vanishing regularization limit. Namely, within the basin of attraction, $\Phi \in \mathcal{B}(\Phi^{(\theta)})$, the RG dynamics takes the asymptotic form
	\begin{equation}
	\mathcal{R}^N[\Phi] \approx \Phi^{(\theta_N)}
	 \quad \textrm{as} \quad N \to \infty,
 	\label{eqVQP1}
    	\end{equation}
where $\Phi^{(\theta_N)}$ are flow maps of the attracting invariant curve. The phases $\theta_N$ depend on the initial flow map $\Phi$, and are related by the circle map $\mathcal{R}_\theta: \theta_{N} \mapsto \theta_{N+1}$ induced by the RG dynamics on the attractor. In other words, for any canonical regularization from the basin of attraction, regularized solutions $u(t) = \mathcal{R}^N[\Phi]_t(a,b)$ approach asymptotically a universal (i.e., independent of regularization) family of solutions $\Phi^{(\theta)}_t(a,b)$. The latter is a one-parameter family of solutions of the ideal IBVP. 

\subsection{Inviscid limit}\label{QPinv}

The viscous regularization (\ref{eq1_V1}) is not canonical because it is not time-scale invariant.
As in Section~\ref{subsec_visc}, the inviscid limit can be related to the RG dynamics of the canonical auxiliary regularization (\ref{eqVM_E1}): the corresponding flow maps are related at infinitesimal time steps by Eq.~(\ref{eqVM_3}). In this relation, $\Phi^{(\nu)}_{dt}(a,b)$ is the flow map of the viscous IBVP, and $\Phi^{(N,\beta)}_{dt}(a,b)$ is the flow map of the auxiliary IBVP with properly chosen parameters $N = N_t^{(\nu)}(a,b)$ and $\beta = \beta_t^{(\nu)}(a,b)$. Verifying numerically that the limit $\nu \to 0$ corresponds to $N \to \infty$ with $\beta \le 1$ and using Eq.~(\ref{eqVQP1}), one concludes that
	\begin{equation}
	\Phi^{(\nu)}_{dt}(a,b) \approx \Phi^{(\theta)}_{dt}(a,b) \quad \textrm{as} \quad \nu \to 0.
 	\label{eqVQP2}
    	\end{equation}
Here the parameter $\theta$, which selects a map from the RG attractor, depends on $(N,\beta)$, which, in turn, are functions of viscosity, time, initial and boundary conditions. This dependence implies that the universal infinitesimal-time relation (\ref{eqVQP2}) does not extend to finite times, unlike in the case of canonical regularizations (\ref{eqVQP1}).

\begin{figure}[tp]
\centering
\includegraphics[width=0.9\textwidth]{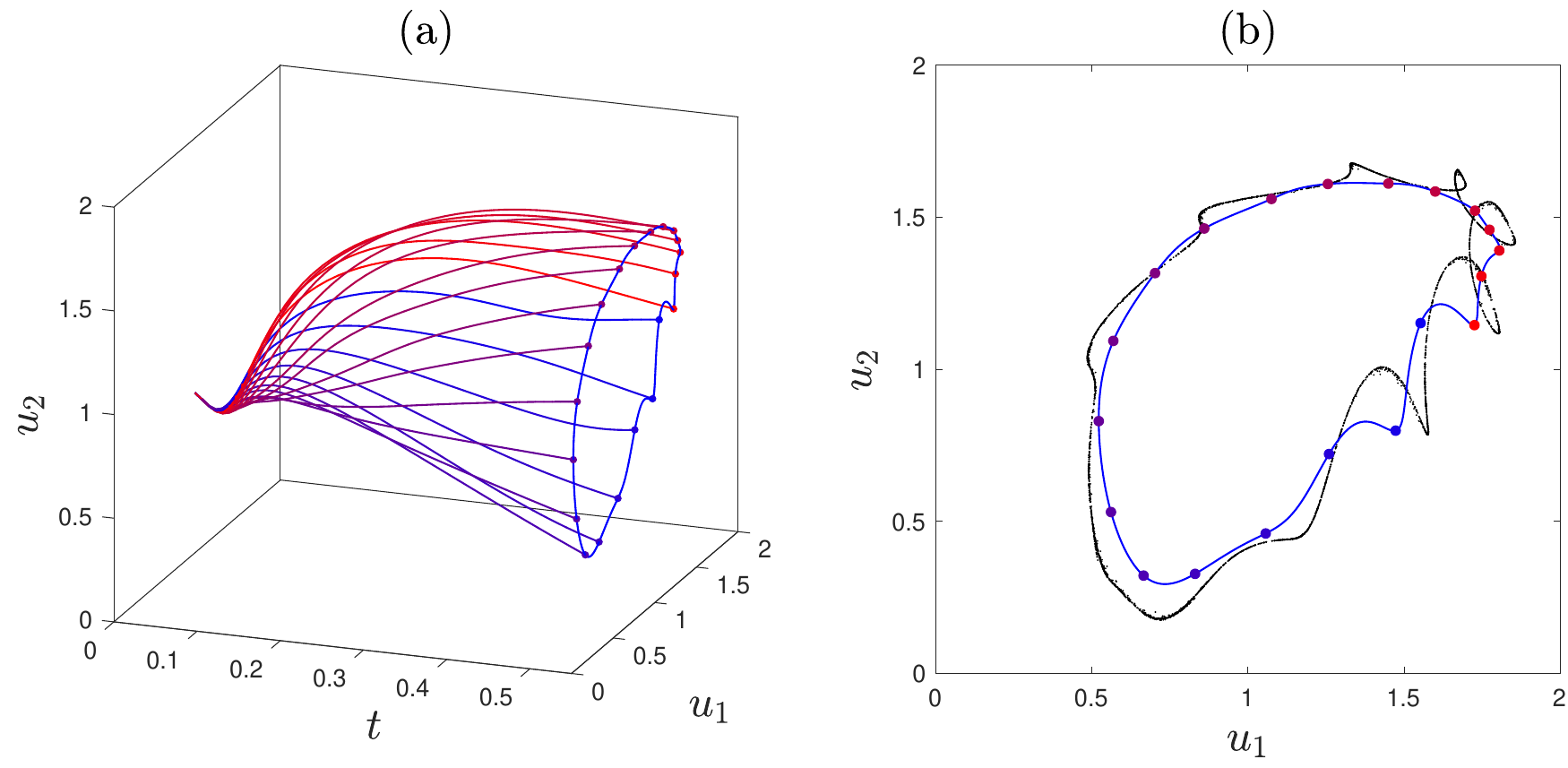}
\caption{(a) Evolutions of the first two variables $(u_1,u_2)$ for the viscous model with the second initial condition $\mathrm{IC}_2$. Different curves correspond to different viscosities logarithmically spaced in the interval $1.65 \times 10^{-10} \le \nu \le 10^{-6}$. (b) Terminal points (at time $t = 0.5$) of the viscous solutions from the left panel are compared with the data from Fig.~\ref{figQP2}(b) featuring the closed-curve RG attractor.}
\label{figQP3}
\end{figure}

The breaking of universality by the viscous regularization is demonstrated in Fig.~\ref{figQP3}(a), where we plot the evolutions of variables $(u_1,u_2)$ for different viscosities logarithmically spaced in the interval $1.65 \times 10^{-10} \le \nu \le 10^{-6}$. Figure~\ref{figQP3}(b) verifies that the viscous solutions are different from the ones corresponding to the RG attractor. Still, the viscous solutions form a closed one-parameter surface with a change of the viscous parameter like in Figs.~\ref{figQP1}(a,b). We expect that this behavior can be explained as a consequence of the small-scale self-similarity, as we did in Section~\ref{sec_AF}. This analysis would be based on a relation similar to Eq.~(\ref{eqVM_E3u}), which may gain a quasiperiodic dependence on $n$. We will not continue this investigation here, leaving it for future studies; note that a similar analysis was performed in \cite{mailybaev2016spontaneous}.

In summary, we see that the RG dynamics can be characterized by different types of attractors that determine universal properties of solutions in the limit of vanishing regularization. One can also study a dependence of the RG dynamics by introducing a parameter in the ideal system. Such a parametric analysis would link qualitative changes in the inviscid limit with bifurcations of the RG attractors. The next example demonstrates the chaotic behavior, the consequences of which go further and can be associated with the phenomenon of spontaneous stochasticity.

\section{Chaotic RG dynamics in the Sabra model}
\label{sec_sabra}

In this section, we extend the RG formalism to the Sabra model~\cite{l1998improved}. 
Equations of the ideal Sabra model are written for $\lambda = 2$ and complex variables $u_n \in \mathbb{C}$ in the form (\ref{eq1_1a})
with the coupling function 
     	\begin{equation}
	f_n := f(u_{n-2},\ldots,u_{n+2}) = i\left(
	\frac{u_{n-1}u_{n-2}}{4} 
	-\frac{u_{n+1}u_{n-1}^*}{2}  
	+2 u_{n+2}u_{n+1}^*
	 \right),
	\label{eqSM_2}
    	\end{equation}
where $i$ is the imaginary unit and the stars denote complex conjugation. The ideal IBVP is defined by Eqs.~(\ref{eq1_1a}) with the initial conditions (\ref{eq1_IC}) and the boundary conditions (\ref{eq1_BCG}).
Solutions of the ideal IBVP blowup in a finite time~\cite{dombre1998intermittency,mailybaev2012renormalization} and require regularization for extending beyond the blowup time. 
The viscous model takes the form (\ref{eq1_V1}).

The ideal IBVP has the time scaling symmetry (\ref{eq1_2TS}) and the space scaling symmetry 
(\ref{eq1_2SS_G}). There is an additional phase symmetry 
     	\begin{equation}
	\tilde{u}_n(t) =  e^{iF_n} u_{n}(t), 
	\quad \tilde{a}_n = e^{iF_n} a_{n}, 
	\quad \tilde{b}(t) = (e^{iF_{-1}} b_{-1}(t), \, e^{iF_0} b_0(t)),
	\label{eqSM_3}
    	\end{equation}
where $F = (F_n)_{n \in \mathbb{Z}}$ is an arbitrary Fibonacci sequence, i.e., $F_{n} = F_{n-1}+F_{n-2}$ for $n \ge 1$. This symmetry mimics translations in physical space, recalling that these translations are given by phase factors in the Fourier representation. 

The RG formalism for the Sabra model is developed in the same way as in Sections~\ref{sec_canon} and \ref{sec_hopf}. 
We learned in Section~\ref{sec_canon} that the RG operator is defined unambiguously if it is restricted to canonical flow maps, which are maximally symmetric. The latter means that the flow maps are invariant with respect to all symmetries except the space scaling (\ref{eq1_2SS_G}). \newtext{As observed in the example of time-scale invariance broken by viscous regularization, this introduces ambiguity in the definition of the RG operator (see Section~\ref{sec_canon}) and also affects the universality of the limiting behavior (see Section~\ref{subsec_visc}). We expect that these features may extend to any additional symmetry of the system.
In the Sabra model, such additional symmetries are the time scaling and phase symmetries.} From now on we assume that all regularizations under consideration are invariant with respect to the phase symmetry (\ref{eqSM_3}). For a flow map $\Phi$ this condition means that the regularized solutions $u(t) = \Phi_t(a,b)$ and $\tilde u(t) = \Phi_t(\tilde a,\tilde b)$ are related by Eq.~(\ref{eqSM_3}). One can see that the viscous model (\ref{eq1_V1}) is phase invariant, but not invariant to the time scaling. 

\begin{figure}[tp]
\centering
\includegraphics[width=0.9\textwidth]{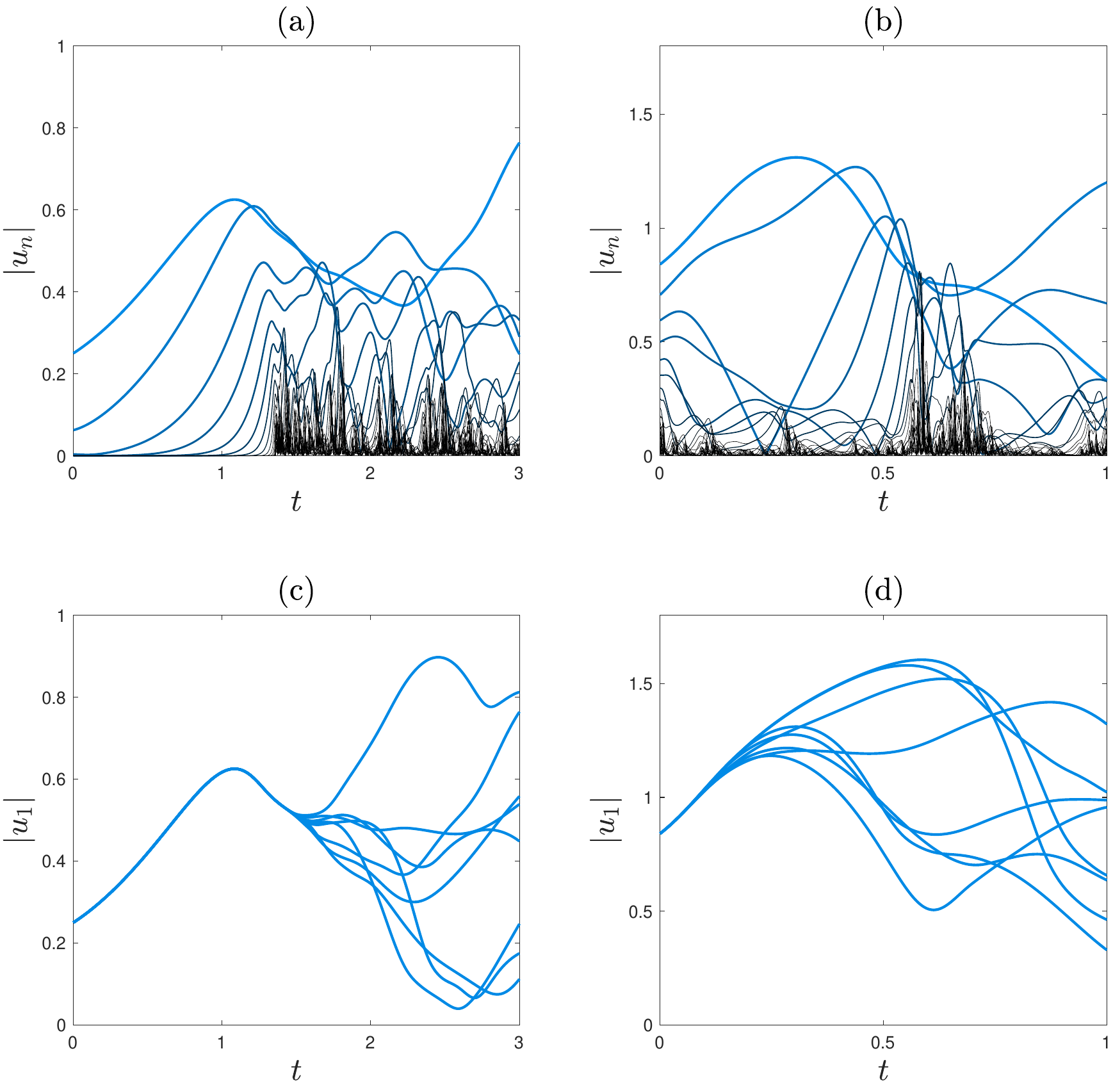}
\caption{Absolute values of shell variables $|u_n(t)|$ for the regularized Sabra model with $(N,J) = (17,2)$ and the initial conditions (a) $\mathrm{IC}_1$ and (b) $\mathrm{IC}_2$. Larger $n$ correspond to darker and thiner curves. The panels (c) and (d) correspond to the same initial conditions and show the absolute variables $|u_1(t)|$ for different $N = 10,11,\ldots,17$.}
\label{fig5}
\end{figure}

The rest of this section presents the results of numerical simulations for the canonical regularizations (\ref{eq2_E1}) with $J = 2$ and different $N$. 
We consider two different (regular and rough) initial conditions of the form
    	\begin{equation}
    	\label{eq3_IC1}
	\mathrm{IC}_1: \ \ a_n = 2^{-k_n}e^{i\sqrt{n}}, \quad 
	\mathrm{IC}_2: \ \ a_n = k_n^{-1/4}e^{i\sqrt{n}}.
    	\end{equation}
and the boundary conditions
 	\begin{equation}
	\label{eq3_IC1B}
	b_{-1}(t) = 1/2, \quad b_0(t) = e^{-it}.
    	\end{equation}
The corresponding evolutions of absolute values $|u_n(t)|$ are presented in Figs.~\ref{fig5}(a,b) for $N = 17$. They demonstrate sharp intermittent fluctuations at small scales strikingly different from mild oscillations in the dyadic model in Figs.~\ref{fig1}(a,b). Figures~\ref{fig5} (c,d) show the time evolutions of the absolute variables $|u_1(t)|$ corresponding to the shell $n = 1$ (the largest scale of motion). Here different curves correspond to different $N = 10,11,\ldots,17$. These graphs suggest that the RG dynamics does not possess a regular (e.g., fixed-point) attractor. We remark that the divergence of solutions for the regular initial condition occurs after the blowup time $t_b \approx 1.36$; see Figs.~\ref{fig5}(a,c).

The irregular change of solutions with increasing $N$ is a hint that the RG dynamics may be chaotic. In order to test this hypothesis, we study the growth of small disturbances of the flow map in the RG dynamics. In the classical chaos, one would observe an exponential growth as a consequence of a positive Lyapunov exponent. 
Let us introduce a sequence of perturbed flow maps $\Phi^{(N,J,\varepsilon)}$ by replacing the term $|u_n|u_n$ in Eq.~(\ref{eq2_E1}) with $(1+\varepsilon)|u_n| u_n$, where $\varepsilon$ is a small perturbation parameter. One can see that $\Phi^{(N,J)} = \Phi^{(N,J,\varepsilon = 0)}$ and the perturbed flow maps satisfy the RG relation analogous to Eq.~(\ref{eq2_E2}) as 
	\begin{equation}
	\Phi^{(N+1,J,\varepsilon)} = \mathcal{R}[\Phi^{(N,J,\varepsilon)}].
	\label{eqSM_5}
    	\end{equation}
Hence, we can analyze the growth of perturbations in the RG dynamics by looking at the difference 
	\begin{equation}
	\delta u(t) = \Phi_t^{(N,J,\varepsilon)}(a,b)-\Phi_t^{(N,J)}(a,b) 
	\label{eqSM_5D}
    	\end{equation}
with increasing $N$ while keeping $J$ and $\varepsilon$ fixed. In this analysis, $\varepsilon$ controls the size of initial perturbation in the space of canonical flow maps.

We set the very small value of $\varepsilon = 10^{-13}$ and compute the perturbations $\delta u(t)$ with very high accuracy for the same initial and boundary conditions (\ref{eq3_IC1}) and (\ref{eq3_IC1B}). Magnitudes of the perturbations are measured with the energy norms $\|\delta u(t) \| = \left( \sum_n |\delta u_n(t)|^2 \right)^{1/2}$. The results are presented in Fig.~\ref{fig6} for $N = 1,\ldots,15$, where we plot $\|\delta u(t) \|$ at $t = 3$ for the first and at $t = 1$ for the second initial condition. The main plot is given in the logarithmic vertical scale demonstrating that the separation between the flow maps grows faster than exponentially. This indicates that the RG dynamics is indeed chaotic, though the separation of solution is faster than exponential as in classical chaotic systems. 

\begin{figure}[t]
\centering
\includegraphics[width=0.56\textwidth]{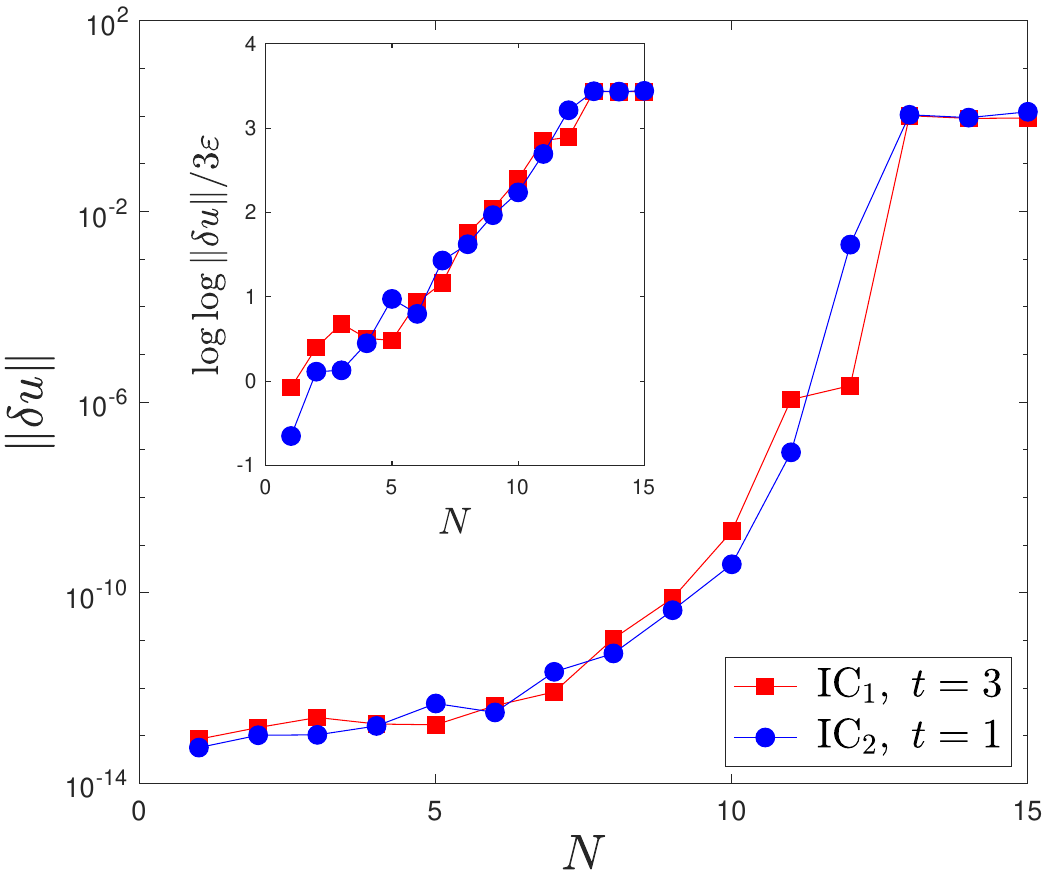}
\caption{The super-exponential growth of small perturbations (\ref{eqSM_5D}) in the RG dynamics of the Sabra model. The two graphs represent the dependence on $N$ for the deviations $\|\delta u(t)\|$ taken at $t = 3$ for the first and $t = 1$ for the second initial conditions. The main plot is given in vertical logarithmic scale. The inset suggest the double exponential growth of perturbations by plotting $\log \log \left(\|\delta u(t) \|/3\varepsilon\right)$ for different $N$.}
\label{fig6}
\end{figure}

The inset of Fig.~\ref{fig6} shows the graphs of $\log \log \left(\|\delta u(t) \|/3\varepsilon\right)$ as functions of $N$, proposing that the growth of perturbations in the RG dynamics is double exponential in $N$. Since $l_N = 2^{-N}$, such a double exponential function of $N$ would be exponential in a negative power of the regularization scale $l_N$. This relation connects our observations to earlier phenomenological results in the turbulence theory relating positive Lyapunov exponents with a dissipative scale; see e.g.~\cite{ruelle1979microscopic}. \newtext{A similar double-exponential growth of perturbations in the RG dynamics was also observed in discrete-time models~\cite{mailybaev2023spontaneously,mailybaev_RG_2025}.}

\section{Discussion}
\label{sec_disc}

Typically, scale-invariant physical systems require regularization at small and/or large scales for determining well-posed long-time solutions. 
What makes such solutions converge (or not) in the limit when the regularization vanishes?
Even for such a charismatic example as the Burgers equation, the rigorous theory is not developed for many regularizations, e.g., the hyperviscous ones.
At the same time numerical simulations suggest that different dissipative mechanisms yield just the same limiting solutions.
We want to understand how the convergence mechanism is related to the multi-scale nature of a system, and how it depends on the choice of regularization. Being interested in  qualitative aspects of the regularization process, we focused our research on shell models, which are much simpler (but still quite complex) multi-scale toy models of  realistic physical systems.

We study initial boundary value problems (IBVP) for ideal and regularized shell models.
In general, each regularized IBVP can be represented by a flow map that contains solutions for all initial and boundary conditions.
We show how the scale invariance of an ideal system defines a renormalization group (RG) operator in the space of flow maps. 
This operator acts by biasing the regularization towards smaller scales and introducing ideal interactions at the freed largest scale.
In this way, we establish the correspondence between the vanishing regularization limit and the RG dynamics in the space of flow maps. 
Thereby, the limiting flow map solving the ideal IBVP is associated with the RG attractor, and the convergence is controlled asymptotically by the leading eigenmode of the linearized RG operator. 
Since the RG operator is defined by the ideal system (with all information on a specific regularization contained in flow maps), this yields a qualitative explanation of the universality in the regularization process.
This method is first applied to the dyadic shell model, where the RG dynamics has a fixed-point attractor.
Using the Gledzer-type shell model, we demonstrated a different situation, when the RG attractor is represented by an invariant closed curve. This example reveals a more sophisticated but still universal behavior in the limit of vanishing regularization. \newtext{The RG attractor represented by an invariant closed curve may also be related to the phenomenon of spontaneous stochasticity when small-scale noise is added to the equations of motion, as studied previously in a similar Gledzer-type model \cite{mailybaev2016spontaneous}.}

Compared to existing RG theories, our approach is closer in spirit to the Feigenbaum theory of a period-doubling cascade~\cite{feigenbaum1983universal}, which operates with the space-time scaling of dynamical systems represented by maps. 
Unlike the Wilson theory of phase transition~\cite{wilson1983renormalization} that introduces coarse-graining at small scales, small details of the flow map are accurately preserved in our RG transformation, and the same refers to the large scales as well. Nevertheless, the role of the RG attractor and leading eigenmode in explaining universality is similar to all theories mentioned. In the dyadic model, the fixed-point RG attractor defines the universal limiting solutions, and the leading eigenmode predicts the universality of deviations from the limiting solutions.

Another nontrivial aspect is that the domain of the RG operator is determined by the symmetries of the ideal system. We have shown that a well-defined RG operator acts in the space of canonical regularizations. These are regularizations with the maximum degree of symmetry: all symmetries must be preserved, except for spatial scaling. Physically motivated regularizations are not necessarily canonical: for example, viscous regularization breaks the time-scale invariance. We showed that an explanation of the inviscid limit in terms of the RG dynamics is still possible, but at the expense of loosing some of the universal properties. In particular, the inviscid limit yields the same universal solutions for the viscous dyadic model, while the deviations cease to be universal. This emphasizes the exceptional role of symmetries in regularized dynamics.

The developed RG approach is potentially applicable to realistic models, such as the Burgers and Navier-Stokes equations. Although obtaining any rigorous results using the RG approach would be very difficult, its predictions are non-trivial and can be verified numerically. This refers both to the universal limiting solutions and universal form of deviations, as we demonstrated with the dyadic model. 
We emphasize that our RG approach is qualitative and aims to explain the universal features of regularizations rather than to calculate specific numerical values.
Another promising direction is the (potentially rigorous) application of our RG formalism to even simpler toy models such as fractal lattice models~\cite{mailybaev2023spontaneously,mailybaev2023spontaneous,mailybaev_RG_2025} and point singularities in low-dimensional ordinary differential equations~\cite{drivas2021life,drivas2020statistical,eyink2020renormalization}.
It would also be interesting to understand the relation of our RG formalism to the hidden scale invariance formulated both for shell models~\cite{mailybaev2021hidden} and Navier--Stokes system~\cite{mailybaev2020hidden,mailybaev2022hidden}. In particular, the extended form of the hidden symmetry~\cite{mailybaev2023hidden,magacho2025scale} is limited to canonical regularizations for similar reasons, predicting the hidden self-similar statistics at small scales in both inertial and dissipative ranges. This analysis may be useful for extending the RG approach to non-canonical (but physically motivated) regularizations, following the auxiliary constructions of Sections~\ref{subsec_visc} and \ref{QPinv}.

In the final part of the paper we tested the Sabra shell model. This model is known to be spontaneously stochastic in the formulation that includes both viscosity and small-scale noise~\cite{mailybaev2016spontaneously,bandak2024spontaneous}. The spontaneous stochasticity means that solutions remain stochastic in the limit when both viscous and noise terms vanish; similar behavior was reported in other shell models as well~\cite{mailybaev2016spontaneous,mailybaev2017toward,biferale2018rayleigh}. In this paper, we demonstrated that the RG dynamics of the Sabra model is chaotic. These two properties, the spontaneous stochasticity and the chaotic RG attractor, are related as we demonstrated in \cite{mailybaev2023spontaneous,mailybaev_RG_2025} for discrete space-time models. For the Sabra model, this relation will be studied in the forthcoming paper.

\section{Appendix}

\subsection{Well-posedness of the regularized IBVP}
\label{subsec_A1}

Consider the regularized Eqs.~(\ref{eq2_E1}) with $u_n(t) = 0$ for $n > N+J$ and the dyadic function (\ref{eq1_1fn}). The corresponding IBVP for initial conditions (\ref{eq1_IC}) and boundary condition (\ref{eq1_BC}) reduces to the initial-value problem for the system of $N+J$ ordinary differential equations. Since the functions (\ref{eq1_1fn}) are smooth, the classical theory of ordinary differential equations tells that this problem is well-posed locally in time. For extending this result to arbitrary positive time, it is sufficient to show that the variables $u_n(t)$ at shells $n = 1,\ldots,N+J$ remain finite (do not blowup) at all times.

Differentiating the norm $\|u\| = \left(\sum_{n =1}^{N+J} u_n^2\right)^{1/2}$ with respect to time and using Eq.~(\ref{eq2_E1}), one has
    	\begin{equation}
	\frac{d\|u\|}{dt} 
	= \frac{1}{\|u\|} \sum_{n =1}^{N+J} u_n \frac{du_n}{dt}
	= \frac{1}{\|u\|} \sum_{n =1}^{N+J} k_nu_n f_n
	-\frac{1}{\|u\|} \sum_{n =N+1}^{N+J} k_n|u_n|^3.
	\label{eqAE_1a}
	\end{equation}
Dropping the dissipative terms yields the inequality
    	\begin{equation}
	\frac{d\|u\|}{dt} 
	\le \frac{1}{\|u\|} \sum_{n =1}^{N+J} k_nu_n f_n.
	\label{eqAE_1b}
	\end{equation}
Substituting Eq.~(\ref{eq1_1fn}) with boundary condition (\ref{eq1_BC}) and $u_n(t) = 0$ for $n > N+J$ into Eq.~(\ref{eqAE_1b}), after proper cancelations yields
    	\begin{equation}
	\frac{d\|u\|}{dt} \le \frac{k_1b_0^2u_1}{\|u\|} \le k_1b_0^2,
	\label{eqAE_1}
	\end{equation}
where the last inequality follows from the property $u_1 \le \|u\|$. Integrating Eq.~(\ref{eqAE_1}), we have
    	\begin{equation}
	0 < \|u(t)\| < \|u(0)\|+k_1 \int_0^t b_0^2(t') dt' < \infty,
	\label{eqAE_2}
	\end{equation}
proving that $\|u\|$ and, hence, all shell variables remain finite at all times.

\subsection{Canonical property and RG relation for the regularized IBVP}
\label{subsec_A2}

The well-posedness of the regularized IBVP shown in the previous subsection implies the existence and uniqueness of the flow maps $\Phi^{(N,J)}$. Equations (\ref{eq2_E1}) with the functions (\ref{eq1_1fn}) are invariant with respect to the time scaling (\ref{eq1_2TS}). Hence, the flow maps are also time-scale invariant. It remains to prove the RG relation (\ref{eq2_E2}) using Definition~\ref{def1}. 

Let $u(t) = \Phi_t^{(N+1,J)}(a,b)$ with $N \ge 0$. Since $N+1 \ge 1$, the first equation in (\ref{eq2_E1}) coincides with the first equation in (\ref{eq2_FMe}). The remaining relations of Eq.~(\ref{eq2_FMe}) can be written as $\tilde{u}_n(t) = \lambda u_{n+1}(t)$ for $n \ge 1$. It is straightforward to check that $\tilde{u}_n(t)$ satisfy the same regularized system (\ref{eq2_E1}) and (\ref{eq1_1fn}) for the cutoff parameter $N$ with the initial and boundary conditions (\ref{eq2_FMf}). Hence, $\tilde u(t) = \Phi_t^{(N,J)}(\tilde a,\tilde b)$. This proves the canonical property and the relation (\ref{eq2_E2}).

\subsection{RG eigenvalue $\rho = -1/2$}
\label{subsec_A3}

By definition of the eigenmode (\ref{eqSRG_1Y}), the eigenvalue $\rho$ is universal with respect to a choice of  canonical regularization, initial and boundary conditions. Hence, one can compute $\rho$ using relation (\ref{eqRG_R3}) for a specific regularization, initial and boundary conditions. Let us consider the regularized model (\ref{eq2_E1}) and (\ref{eq1_1fn}) with $\lambda = 2$, arbitrary $N$ and $J = 1$, and the constant boundary condition 
	\begin{equation}
	b_0(t) \equiv 1. 
	\label{eqRG_R3A1}
	\end{equation}
Initial conditions $a$ will not be important, and we denote $u^{(N)}(t) = \Phi_t^{(N,1)}(a,b)$.
	
We assume (as strongly suggested by numerically simulations) that the regularized IBVP has a stationary attractor, $u^{(N)}(t) \to \hat{u}^{(N)}$ as $t \to \infty$. This attractor depends on $N$ as specified in the superscript. We further  assume that the eigenmode approximation (\ref{eqRG_R3}) is valid in the limit $t \to \infty$, i.e., it can be formulated for the attractors as
	\begin{equation}
	\hat{u}^{(N)} \approx \hat{u}^{\infty} + \rho^N v 
	\ \ \textrm{as} \ \ N \to \infty,
	\label{eqRG_R3A2}
	\end{equation}
where $\hat{u}^{\infty}$ is a stationary state of the ideal model and $v$ is a constant sequence. 

One can verify that the stationary solution of the ideal model (\ref{eq1_1a}) and (\ref{eq1_1fn}) with boundary condition (\ref{eqRG_R3A1}) has the form 
	\begin{equation}
	\hat{u}^\infty = (2^{-n/3})_{n \ge1}.
	\label{eqRG_R3A3}
	\end{equation}
The stationary solution $\hat{u}^{(N)}$ is found as
	\begin{equation}
	\hat{u}^{(N)} = (\hat{u}^{(N)}_n)_{n \ge 1}, \quad
	\hat{u}^{(N)}_n = \left\{ \begin{array}{ll}
		2^{-n/3-3^{-2}\rho^N[(-1)^{n} 2^n-1]}, & 1 \le n \le N+1; \\[2pt]
		0, & n > N+1; 
	\end{array} \right.
	\label{eqRG_R3A4}
	\end{equation}
with $\rho = -1/2$; one can check this expression by the direct substitution into Eqs.~(\ref{eq2_E1}) and (\ref{eq1_1fn}). The factor $\rho^N$ tends to zero in the limit of vanishing regularization $N \to \infty$. The Taylor expansion of Eq.~(\ref{eqRG_R3A4}) with respect to $\rho^N$ yields Eq.~(\ref{eqRG_R3A2}) with 
	\begin{equation}
	v = (v_n)_{n \ge 1}, \quad
	v_n = \left\{ \begin{array}{ll}
	-3^{-2}\big((-1)^{n} 2^n-1\big)2^{-n/3}\log 2, & 1 \le n \le N+1; \\[2pt]
		0, & n > N+1.
	\end{array} \right.
	\label{eqRG_R3A5}
	\end{equation}
This derivation yields the eigenvalue $\rho = -1/2$, in full agreement with the numerical simulations reported in Section~\ref{sec_fixedpoint}.

\vspace{2mm}\noindent\textbf{Acknowledgments.} 
This work was supported by CNPq grant 308721/2021-7, FAPERJ grant E-26/201.054/2022 and CAPES MATH-AmSud project CHA2MAN. 

\vspace{2mm}\noindent\textbf{Data Accessibility.} 
Data available on reasonable request.

\bibliographystyle{plain}
\bibliography{refs}

\end{document}